\documentclass{article}
\usepackage{graphicx} 
\usepackage{mathtools}
\usepackage{amsfonts}
\usepackage{amsthm}
\usepackage{array}
\usepackage{soul}
\usepackage{amssymb}
\usepackage{graphicx}
\usepackage{float}
\usepackage{placeins}
\usepackage{bm}
\usepackage{amsmath}
\usepackage[dvipsnames]{xcolor}
\usepackage{braket}
\usepackage{tikz-cd}
\usepackage[colorlinks,linkcolor=blue]{hyperref}
\usepackage[nameinlink]{cleveref}
\usepackage{stmaryrd}
\usepackage{comment}

\newcommand{\R}{\mathbb{R}}
\newcommand{\Q}{\mathbb{Q}}
\newcommand{\Z}{\mathbb{Z}}
\newcommand{\I}{\mathcal{O}}
\newcommand{\C}{\mathbb{C}}
\newcommand{\q}{\left(\frac{a,b}{K}\right)}
\newcommand{\tens}[1]{%
\mathbin{\mathop{\otimes}\displaylimits_{#1}}%
}

\def\house#1{{%
    \setbox0=\hbox{$#1$}
    \vrule height \dimexpr\ht0+1.4pt width .4pt depth \dp0\relax
    \vrule height \dimexpr\ht0+1.4pt width \dimexpr\wd0+2pt depth \dimexpr-\ht0-1pt\relax
    \llap{$#1$\kern1pt}
    \vrule height \dimexpr\ht0+1.4pt width .4pt depth \dp0\relax
}}

\newtheorem{theorem}{Theorem}[subsection]
\newtheorem{lemma}[theorem]{Lemma}
\newtheorem{corollary}[theorem]{Corollary}

\newtheorem{proposition}[theorem]{Proposition}

\theoremstyle{definition}
\newtheorem{definition}[theorem]{Definition}
\newtheorem{example}[theorem]{Example}

\title{Well-Rounded Ideal Lattices from Totally Definite Quaternion Algebras}
\author{Yuan Xiang Chew\footnote{School of Physical and Mathematical Sciences, Nanyang Technological University, Singapore, {email:} S240055@e.ntu.edu.sg. The work of Yuan Xiang Chew was supported by a Nanyang President's  Graduate Scholarship, whose support is gratefully acknowledged.} and Fr\'ed\'erique Oggier }
\date{}

\begin{document}

\maketitle

\begin{abstract}
We study well-rounded ideal lattices from totally definite quaternion algebras. We prove existence and classification results, and illustrate our methods with examples.    
\end{abstract}

{\em Keywords. Lattices, Quaternion algebras, Well-rounded lattices.}

\section{Introduction}

Geometrically, a lattice $L$ in $\R^n$ is a discrete subgroup, whose points are described as integral linear combinations of $n$ linearly independent basis vectors, that is $L \coloneqq \set{\boldsymbol{x}M| \boldsymbol{x}\in \Z^n}$, where $M$ is a matrix called a generator matrix, which contains as rows basis vectors. 
The matrix $G\coloneqq MM^T$ is called a Gram matrix. 
%
%
The minimum $\|L\| $ of $L$ is defined by
$
\|L\| \coloneqq \operatorname{min}\set{\|\boldsymbol{x}\|^2| \boldsymbol{x} \in L\backslash{}\set{0}}.
$
All vectors of squared norm $\|L\|$ are called minimum vectors of $L$ and are collected in the set $S(L) \coloneqq \set{\boldsymbol{x} \in L| \|\boldsymbol{x}\|^2 = \|L\|}$. 
\begin{definition}\cite[p.14]{Martinet2010PerfectLI}
A lattice $L$ in $\R^n$ is said to be {\em well-rounded} if $span_\R(S(L)) = \R^n$, or equivalently, $S(L)$ contains an $\R$-basis for $\R^n$.    
\end{definition}

The property of being well-rounded generalizes that of being perfect (we recall that a perfect lattice $L$ in $\R^n$, as defined by Korkine and  Zolotarev, is a lattice completely determined by the set $S(L)$ in the sense that there is only one positive definite quadratic form taking value $1$ at all points of $S(L)$), since every perfect lattice is well-rounded. Many theoretical properties of well-rounded lattices are thus found in the literature devoted to the classification of perfect lattices (see  \cite{Martinet2010PerfectLI}).
Well-rounded lattices are also connected to the classical number-theoretic problem known as Minkowski's conjecture (see \cite{mcmullen2005minkowski}, where a two-step strategy is proposed to prove the conjecture, relying on well-rounded lattices). 

For applications of lattices in coding theory and cryptography, reduced bases such as the (Hermite–)Korkine–Zolotarev reduced basis are extensively studied (see e.g., \cite{porter2024new} for recent results). Well-rounded lattices are used in the context of lattice decoding using the Korkin-Zolotarev reduced basis \cite{banihashemi2002complexity}. 

Several works addressed the construction of well-rounded lattices from number fields (we recall that constructions of lattices from number fields, called ideal lattices, require a number field, its ring of integers, and an ideal, integral or fractional, of the ring of integers, equipped with a trace form \cite{Bayer-Fluckiger:1999}): \cite{fukshansky2012well} provides classification results, namely that a lattice built from a ring of integers is well-rounded  exactly when the number field is a cyclotomic field; \cite{alves2025well} provides a small correction regarding the hypotheses used in \cite{fukshansky2012well} which leads to the same conclusion. If a $\Z$-module or a fractional ideal that is not the ring of integers is used, the number field is not required to be a cyclotomic field, 
thus rendering meaningful the study of well-rounded lattices over different types of number fields such as cyclic cubic and quartic fields \cite{tran2023well}, and number fields of odd prime degree \cite{de2019well,bastos2025well}; the latter works consider the particular case of ramification.

In Section \ref{sec:quat}, we generalize the number field setting to quaternion algebras: number fields are replaced by totally definite quaternion algebras, rings of integers by orders, ideals of a ring of integers by ideals of orders. We also need a suitable trace form, and an embedding into $\R^n$. Section \ref{sec:WR} contains the main results, namely theorems discussing the existence and classification of well-rounded lattices coming from totally positive quaternion algebras. Examples and points of discussion are presented in Section \ref{sec:ex}.

\section{Ideal Lattices from Quaternion Algebras}
\label{sec:quat}

\subsection{Totally definite quaternion algebras}
\begin{definition}
A {\em quaternion algebra over a field $K$ (of  characteristic $\operatorname{char} K \neq 2)$} is a central simple algebra (a $K$-algebra whose center is $K$ and which has no nontrivial two-sided ideal) of dimension $4$ defined by
    \[A =\q =K \oplus{} Ki \oplus{}Kj \oplus{}Kij\] with a canonical $K$-basis $\set{1,i,j,ij}$ satisfying 
    \[i^2= a, \hspace{3mm} j^2 =b, \hspace{3mm} \text{and } ij=-ji\] where $a,b\in K^\times = K\backslash{} \{0\}$. 
\end{definition} 

The quaternion algebra $A=\left(\frac{-1,-1}{\R} \right)$ is known as Hamilton Quaternions, and is denoted by $\mathbb{H}$. 

Let $K$ be a number field of degree $n$ and signature $(s,t)$, i.e., $K$ has $s$ real embeddings and $t$ pairs of complex embeddings. We order the embedding set $Emb_\Q(K,\C) = \set{\sigma_m|m=1,\ldots,n}$ such that the first $s$ embeddings are real,  followed by $t$ pairs of complex conjugate embeddings.
Let $\I_K$ be the ring of integers of $K$. 



Set $A_\R \coloneqq A\tens{\Q}\R$ and let $M_2(\R)$, $M_2(\C)$ denote the ring of $2\times 2$ matrices with coefficients in $\R$, respectively $\C$. We have the following $\R$-algebra isomorphism \cite[Theorem 8.1.1]{MaclachlanReid}
 \begin{equation*}
     \begin{split}
         \varphi: A_\R  &\rightarrow \mathbb{H}^{s'} \times M_2(\R)^{s-s'} \times M_2(\C)^t \\
         x\tens{}r &\mapsto r(\sigma_1(x),\ldots,\sigma_n(x))
     \end{split}
 \end{equation*}
where $\sigma_m(x)$ is the action of $\sigma_m \in Emb_{\Q}(K,\C)$ on the coefficient of $x$ with respect to the canonical $K$-basis $\set{1,i,j,ij}$ and $s'\leq s$.

\begin{definition}\cite[Definition 14.5.7]{voight2021quaternion}
The quaternion algebra 
$A$ is said to be {\it totally definite} if $A_\R \cong \mathbb{H}^n$. In this case, $K$ is necessarily totally real, meaning that $s=n$ ($s'=n$ implies $s=n$).
 \end{definition}

For $x = x_0+x_1i +x_2j +x_3ij\in A$, there is a canonical involution on $A$ defined by $\overline{x}=x_0-x_1i -x_2j -x_3ij$. The reduced trace and reduced norm of $x$ are defined respectively as $tr_{A/K}(x) = x+\overline{x}$ and $n_{A/K}(x) = x\overline{x}$. They are elements of $K$.

We define a reduced trace map on $A_\R$ using the isomorphism $\varphi$ defined above.
For any $x\tens{}r \in A_\R$, set $\varphi(x\tens{}r)= (x_1,\ldots,x_n)$, then we have a map $tr_{A_\R/\R}: A_\R \rightarrow \R$ given by
\[ tr_{{A_{\R}}/\R}(x\tens{}r) = \sum_{i=1}^{s'}tr_{\mathbb{H}/\R}(x_i)+ \sum_{i=s'+1}^{s}tr_{M_2(\R)/\R}(x_i) + \sum_{i=s+1}^{s+t}tr_{M_2(\C)/\R}(x_i) \in \R.\]
Furthermore, the canonical involution on the quaternion algebra $A$ induces an $\R$-linear involution on $A_\R$ by $\overline{x\tens{}r} = \overline{x}\tens{}r.$

\subsection{Ideals and orders}

Let $A$ be a $K$-algebra where $K$ is a number field. The following definitions are from \cite{Reiner}.
\begin{definition}
An {\it ideal} (or $\I_K$-ideal) $I$ in $A$ is a finitely generated $\I_K$-module such that $I \tens{\I_K}K \cong A$. Equivalently, $I$ contains a $K$-basis for $A$.
\end{definition}
\begin{definition} 
An {\it order}  (or $\I_K$-order) $\Lambda$ of $A$ is an ideal of $A$ such that it is a subring of $A$ containing the multiplicative identity element $1$ from $A.$ An order is said to be {\it maximal} with respect to inclusion.
\end{definition}

For any ideal $I$, we can define the following two orders
\[ O_L(I)= \set{x\in A| xI \subseteq I},\hspace{5mm} O_R(I)= \set{x\in A | Ix\subseteq I}\] which are called the left order and right order of $I$ respectively.

A left ideal $I$ of $\Lambda$ satisfies $\Lambda I \subseteq I$ (i.e., $\Lambda \subseteq O_L(I)$), a right ideal $I$ of $\Lambda$ satisfies $I \Lambda \subseteq I$ (i.e., $\Lambda \subseteq O_R(I)$)
and a two-sided ideal $I$ of $\Lambda$ is stable under multiplication by $\Lambda$ on both sides. An ideal $I$ of $\Lambda$ is said to be {\it integral} if $I \subseteq \Lambda$. If $I$ is not integral, there exists $x \in \I_K$ such that $xI \subseteq \Lambda$ and $I$ is said to be fractional.

An equivalent definition of an ideal $I$ is a finitely generated torsion free $\I_K$-module \cite[p.44]{Reiner}. 

Since $I$ is finitely generated torsion free over $\I_K$, it is also finitely generated torsion free over $\Z \subseteq \I_K$ and we know that finitely generated torsion free $\Z$-modules are free of rank $4n$, thus all ideals (of $A$ or $\Lambda$) and orders have a $\Z$-basis.

The reduced norm of an ideal is defined as $n_{A/K}(I) =\set{n_{A/K}(x)|x \in I}\I_K$ which is a fractional ideal of $K$ \cite[Definition 16.3.1]{voight2021quaternion}. 
Let $A$ be a quaternion algebra over $K$, $\Lambda$ be an order of $A$ and denote its unit group by $\Lambda^\times.$

\begin{definition}
The {\it reduced norm one subgroup} is defined as 
\[
\Lambda^1 = \set{x\in \Lambda| n_{A/K}(x)=1}\subseteq \Lambda^\times
\]
which is also called the torsion subgroup of $\Lambda^\times.$    
\end{definition}

\subsection{Lattices}

We will work with the following algebraic definition of a lattice.

\begin{definition}
     A {\it lattice} is a pair $(L,b_\R)$ where $L$ is a free $\Z$-module of rank $n$ and $b_\R: L_\R \times L_\R \rightarrow \R$ is a positive definite symmetric bilinear form where $L_\R \coloneqq L \tens{\Z}\R$. 
\end{definition}

We first give a general definition of ideal lattices constructed from semi-simple algebras over $\Q$ (since we work with finite-dimensional algebras, semi-simple algebras are algebras that can be expressed as a Cartesian product of simple subalgebras).

\begin{definition}\cite[Definition 1.5.7]{Jerome2006}\label{Def: general ideal lattice}
Let $A$ be a semi-simple $\Q$-algebra of finite dimension and let $\Lambda$ be a $\Z$-order. Set $A_\R = A \tens{\Q}\R$ and let $\gamma$ be an $\R$-linear involution. An {\it ideal lattice} is the pair $(I,b)$ where $I$ is a right ideal of the order $\Lambda$ and $b: A_\R \times A_\R\rightarrow \R$ is a positive definite bilinear form satisfying 
\[ b(\boldsymbol{\lambda}\textbf{x},\textbf{y})= b(\textbf{x},\boldsymbol{\lambda}^\gamma \textbf{y}) \label{eqn: bilinear form condition}\] for any $\textbf{x},\textbf{y} \in I\tens{\Z} \R$ and $\boldsymbol{\lambda} \in \Lambda \tens{\Z}\R.$
\end{definition}

We note that an $\I_K$-order $\Lambda$ in a quaternion algebra $A$ over a number field $K$ is a $\Z$-order in a
semi-simple $\Q$-algebra, even though not all $\Z$-orders are $\I_K$-orders.

Under some restriction on $A$, for $A$ a quaternion algebra, and $K$ a number field, the symmetric bilinear form $tr_{A_\R/\R}$ defined in Subsection \ref{sec:quat} can be made positive definite and thus to satisfy the condition in Definition \ref{Def: general ideal lattice}. 

We recall that an element $x\in K^\times$ is called totally positive if $\sigma_m(x)>0$ for all $m=1,\ldots,n.$

\begin{proposition}\cite[Lemma 2.1, Proposition 2.2]{hou2017construction}\label{Prop: positive definite bilinear form}
Let $A$ be a totally definite quaternion algebra over the number field $K$, and $\alpha \in K^\times$ be totally positive. The map 
    \begin{equation*}
        \begin{split}
            b_\alpha: A_\R \times A_\R &\rightarrow  \R\\
            (\textbf{x,y}) &\mapsto tr_{A_\R/\R}(\alpha \textbf{x}\overline{\textbf{y}})
        \end{split}
    \end{equation*} is a positive definite symmetric bilinear form satisfying 
    \[ b_\alpha(\boldsymbol{\lambda}\textbf{x},\textbf{y})= b_\alpha(\textbf{x},\overline{\boldsymbol{\lambda}}\textbf{y})\]
    where $\alpha$ is understood as $\alpha \tens{}1$ 
\end{proposition}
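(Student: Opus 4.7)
The plan is to verify the three assertions separately: bilinearity, symmetry, the involution relation, and positive definiteness, handling them in order of increasing difficulty. Bilinearity is immediate, since $tr_{A_\R/\R}$ is $\R$-linear and quaternion multiplication in $A_\R$ is $\R$-bilinear, so I would just note this and move on.

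For symmetry, I would use two well-known facts about the reduced trace on a quaternion algebra, each transported to $A_\R$ via the isomorphism $\varphi$: (i) $tr_{A_\R/\R}(z) = tr_{A_\R/\R}(\overline{z})$, since $z+\overline{z}$ is fixed by the involution, and (ii) the reduced trace is cyclic, $tr_{A_\R/\R}(zw) = tr_{A_\R/\R}(wz)$; both reduce, component by component, to the analogous statements for $\mathbb{H}$ (equivalently, for $M_2$ over the respective field). Together with the crucial fact that $\alpha \in K$ is central in $A$ and self-conjugate under the canonical involution (so $\overline{\alpha} = \alpha$ and $\alpha$ commutes with every element), these give
\[
b_\alpha(\mathbf{x},\mathbf{y}) = tr_{A_\R/\R}(\alpha \mathbf{x}\overline{\mathbf{y}}) = tr_{A_\R/\R}(\overline{\alpha \mathbf{x}\overline{\mathbf{y}}}) = tr_{A_\R/\R}(\alpha \mathbf{y}\overline{\mathbf{x}}) = b_\alpha(\mathbf{y},\mathbf{x}),
\]
and similarly
\[
b_\alpha(\boldsymbol{\lambda}\mathbf{x},\mathbf{y}) = tr_{A_\R/\R}(\alpha \boldsymbol{\lambda}\mathbf{x}\overline{\mathbf{y}}) = tr_{A_\R/\R}(\alpha \mathbf{x}\overline{\mathbf{y}}\boldsymbol{\lambda}) = tr_{A_\R/\R}\!\big(\alpha \mathbf{x}\,\overline{\overline{\boldsymbol{\lambda}}\mathbf{y}}\big) = b_\alpha(\mathbf{x},\overline{\boldsymbol{\lambda}}\mathbf{y}),
\]
by cyclicity and the identity $\overline{\overline{\boldsymbol{\lambda}}\mathbf{y}} = \overline{\mathbf{y}}\boldsymbol{\lambda}$.

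The main point, and the only place the totally definite hypothesis enters, is positive definiteness, so that is where I would be most careful. The strategy is to push everything through $\varphi$. Since $A$ is totally definite, $\varphi: A_\R \xrightarrow{\sim} \mathbb{H}^n$; write $\varphi(\mathbf{x}) = (x_1,\ldots,x_n)$. The involution on $A_\R$ corresponds componentwise to the Hamilton conjugation on each $\mathbb{H}$-factor, so $\varphi(\alpha \mathbf{x}\overline{\mathbf{x}}) = \bigl(\sigma_m(\alpha)\, x_m\overline{x_m}\bigr)_{m=1}^n$. In each Hamilton factor $x_m \overline{x_m} = n_{\mathbb{H}/\R}(x_m) \in \R_{\geq 0}$ is the usual quaternion norm, and the reduced trace of a real scalar $r \in \mathbb{H}$ is $2r$. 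Consequently
\[
b_\alpha(\mathbf{x},\mathbf{x}) = \sum_{m=1}^n 2\,\sigma_m(\alpha)\, n_{\mathbb{H}/\R}(x_m).
\]
Because $\alpha$ is totally positive, every $\sigma_m(\alpha) > 0$, so this sum is a non-negative combination of non-negative reals; it vanishes iff each $x_m = 0$, i.e.\ iff $\mathbf{x}=0$.

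The one step I expect might need care, rather than outright difficulty, is checking that the two identities $tr(z)=tr(\overline{z})$ and $tr(zw)=tr(wz)$ on $A$ extend to $A_\R$; the cleanest route is to define them factor by factor via $\varphi$, so that they become standard facts about $\mathbb{H}$ (and, in the non-definite case, about $2\times 2$ matrix trace). Once these are in hand, the symmetry and involution compatibility are purely formal, and the totally definite hypothesis is used exactly once, to ensure that each $n_{\mathbb{H}/\R}(x_m)$ is automatically non-negative, thereby giving positive definiteness from the total positivity of $\alpha$.
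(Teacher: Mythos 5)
Your proof is correct. The paper does not prove this proposition itself --- it is quoted from the cited reference [Hou, Lemma 2.1 and Proposition 2.2] --- so there is no in-paper proof to compare against; but your positive-definiteness computation, pushing $\alpha\mathbf{x}\overline{\mathbf{x}}$ through $\varphi$ to get $b_\alpha(\mathbf{x},\mathbf{x})=\sum_{m=1}^n 2\,\sigma_m(\alpha)\,n_{\mathbb{H}/\R}(x_m)$, is exactly the computation the paper carries out immediately afterwards when verifying that $\psi\circ\varphi$ is an isometry, and your formal derivations of symmetry and of $b_\alpha(\boldsymbol{\lambda}\mathbf{x},\mathbf{y})=b_\alpha(\mathbf{x},\overline{\boldsymbol{\lambda}}\mathbf{y})$ from centrality of $\alpha$, $\overline{zw}=\overline{w}\,\overline{z}$, and the cyclicity/conjugation-invariance of the reduced trace are sound.
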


From now on, we assume that $A= \q$ is a totally definite quaternion algebra (implying that $K$ is totally real) and that $\alpha$ is totally positive.
Since we will evaluate the bilinear form $b_\alpha$ with elements from $A$, we write $x=x\tens{}1$ for all $x\in A.$ The reduced trace can be rewritten as $tr_{{A_\R}/\R}(\alpha x\overline{y}) = Tr_{K/\Q}(tr_{A/K}(\alpha x\overline{y}))$ by a direct computation \cite[p.4]{hou2017construction}, where $Tr_{K/\Q}$ refers to the trace defined over $K$, that is $Tr_{K/\Q}(a)=\sum_{\sigma_m\in Emb_{\Q}(K,\C)} \sigma_m(a)$ for $a\in K$.

Next, we would like to construct an embedding of $A$ into $\R^{4n}$. Let $\iota: A \hookrightarrow A_\R$ be the inclusion map.  Note that since $A$ is totally definite, then $\sigma_m(a),\sigma_m(b)< 0 $ for all $\sigma_m \in Emb_\Q(K,\C)$. Let $\set{1,i',j',i'j'}$ be the standard $\R$-basis of $\mathbb{H}$ and set $\alpha_m = \sqrt{\sigma_m(\alpha)}$. Then, for $y=y_0+y_1i'+y_2j'+y_3i'j$, the map 
\begin{equation*}
    \begin{split}
        \psi_m : \mathbb{H} &\rightarrow \R^4\\
        y &\mapsto \sqrt{2}(\alpha_m y_0,\alpha_m\sqrt{-\sigma_m(a)} y_1,\alpha_m\sqrt{-\sigma_m(b)} y_2,\alpha_m\sqrt{\sigma_m(ab)} y_3)
    \end{split} 
\end{equation*}
is an $\R$-vector space isomorphism.

This induces the following $\R$-vector space isomorphism 
\begin{equation*}
    \begin{split}
        \psi: \mathbb{H}^n \rightarrow \R^{4n}\\
        (z_1,\ldots,z_n) &\mapsto (\psi_1(z_1),\ldots,\psi_n(z_n)).
     \end{split}
\end{equation*}
The composition $\psi \circ \varphi: A_\R \rightarrow \R^{4n}$ is an isometric isomorphism of $\R$-vector space between $A_\R$ equipped with $b_\alpha$ defined in Proposition \ref{Prop: positive definite bilinear form} and $\R^{4n}$ with 
the usual Euclidean dot product. To see this, we will show that both inner product coincide. Let $x\tens{}r \in A_\R$ where $x= x_0+x_1i+x_2j+x_3ij$, then 
\begin{equation*}
    \begin{split}
        b_\alpha(x\tens{}r,x\tens{}r) = tr_{A_\R/\R}( \alpha x\overline{x}\tens{}r^2) &= r^2\sum_{i=1}^n tr_{\mathbb{H}/\R}(\sigma_i(\alpha x\overline{x}))\\     
        &= r^2  \sum_{i=1}^n 2\sigma_i(\alpha x\overline{x})\\
        &= 2r^2 Tr_{K/\Q}(\alpha x\overline{x})\\
        &= r^2Tr_{K/\Q}(tr_{A/K}(\alpha x\overline{x})).
    \end{split}
\end{equation*}
On the other hand,
\begin{equation*}
    \begin{split}
        &~~~~\psi\circ \varphi (x\otimes{}r)\\
    &=\psi(r(\sigma_1(x),\ldots,\sigma_n(x)))\\
        &= r(\psi(\sigma_1(x)),\ldots, \psi(\sigma_n(x)))\\
        &=\sqrt{2}r(\alpha_1 \sigma_1(x_0), \alpha_1\sqrt{-\sigma_1(a)}\sigma_1(x_1),\alpha_1\sqrt{-\sigma_1(b)}\sigma_1(x_2),\alpha_1\sqrt{\sigma_1(ab)}\sigma_1(x_3)\\
        &\hspace{60mm} \vdots\\
        & \hspace{10mm}\alpha_n \sigma_n(x_0), \alpha_n\sqrt{-\sigma_n(a)}\sigma_n(x_1),\alpha_n\sqrt{-\sigma_n(b)}\sigma_n(x_2),\alpha_n\sqrt{\sigma_n(ab)}\sigma_n(x_3)).
    \end{split}
\end{equation*} Then the dot product of $\psi\circ \varphi(x\otimes{}r)$ with itself is  \[2r^2 \sum_{i=1}^n \sigma_i\left(\alpha (x^2_0 -ax^2_1 -bx^2_2 + ab x^2_3)\right)= r^2Tr_{K/\Q}(tr_{A/K}(\alpha x\overline{x})).\]

\begin{definition}\label{Def: canonical embedding}
    The composition $\sigma= \psi\circ \varphi \circ \iota$ which will be called the {\it canonical  embedding} of $\Q$-vector space is the map
    \begin{equation*}
    \begin{split}
        \sigma: A &\rightarrow \R^{4n}\\
         x 
        & \mapsto \sqrt{2}(\alpha_1\sigma_1(x_0), \alpha_1\sqrt{-\sigma_1(a)}\sigma_1(x_1), \alpha_1\sqrt{-\sigma_1(b)}\sigma_1(x_2),\alpha_1\sqrt{\sigma_1(ab)}\sigma_1(x_3), \\
        & \hspace{60mm} \vdots\\ & \hspace{7mm}\alpha_n\sigma_n(x_0), \alpha_n \sqrt{-\sigma_n(a)}\sigma_n(x_1), \alpha_n\sqrt{-\sigma_n(b)}\sigma_n(x_2),\alpha_n\sqrt{\sigma_n(ab)}\sigma_n(x_3))
    \end{split}
\end{equation*} where $x = x_0+x_1i+x_2j+x_3ij$ and $\alpha_i = \sqrt{\sigma_{i}(\alpha)}.$
\end{definition}

Let $I$ be any ideal and $\set{e_1,\ldots,e_{4n}}$ be a $\Z$-basis for $I$, then $\sigma(I)$ is a lattice in $\R^{4n}$ with $\Z$-basis $\set{\sigma(e_1),\ldots, \sigma(e_{4n})}$ and let $M$ be a generator matrix whose $i$-th row is the vector $\sigma(e_i)$. For any $x= x_0+x_1i+x_2j+x_3ij, y= y_0+y_1i+y_2j+y_3ij \in A$, then 
\begin{equation*}
    \begin{split}
        \sigma(x)\sigma(y)^T &= 2\sum_{m=1}^n \alpha^2_m\sigma_m(x_0y_0) - \alpha^2_m\sigma_m(ax_1y_1)- \alpha^2_m\sigma_m(bx_2y_2)+ \alpha^2_m\sigma_m(abx_3y_3)\\
        &=  \sum_{m=1}^n \sigma_m(2\alpha (x_0y_0 - ax_1y_1 -bx_2y_2 + abx_3y_3))\\
        &= \sum_{m=1}^n \sigma_m(tr_{A/K}(\alpha x\overline{y}))\\
        &= Tr_{K/\Q}(tr_{A/K}(\alpha x \overline{y})).
    \end{split}
\end{equation*}
The corresponding Gram matrix is $G = MM^T = \left(Tr_{K/\Q}(tr_{A/K}(\alpha e_i \overline{e_j})) \right)_{1\leq i,j\leq 4n}.$

\begin{definition}
    Let $A$ be a totally definite quaternion algebra over a number field $K$, $I$ be a right ideal of an $\I_K$-order $\Lambda$ and $\alpha\in K^\times$ be totally positive. An {\it ideal lattice} is then the pair $(I,b_\alpha)$ where $b_\alpha$ is the positive definite symmetric bilinear form defined in Proposition \ref{Prop: positive definite bilinear form}. We also write $\sigma(I)=(I,b_\alpha)$ where $\sigma$ is the canonical embedding.
\end{definition}

\section{Well-Rounded Lattices from Quaternion Algebras}
\label{sec:WR}

\subsection{Existence of well-rounded lattices}

Recall that 
$
\|(I,b_\alpha)\| \coloneqq \operatorname{min}\set{\|\boldsymbol{x}\|^2| \boldsymbol{x} \in (I,b_\alpha)\backslash{}\set{0}}.
$
We first give a lower bound on $\|(I,b_\alpha)\|$.
\begin{proposition}\label{Prop: Lower bound for Hermite invariant for quaternion algebra}
Let $A=\q$ be a totally definite quaternion algebra over the number field $K$ of degree $n$, $I$ be a right ideal of an $\I_K$-order $\Lambda$ and consider the ideal lattice $(I,b_\alpha)$ where $\alpha \in K^\times$ is totally positive. Then
    \[ \|(I,b_\alpha)\|\geq 2nN_{K/\Q}(\alpha)^\frac{1}{n}N_{K/\Q}(n_{A/K}(I))^\frac{1}{n}.\]  
\end{proposition}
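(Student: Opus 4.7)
The plan is to bound $\|x\|^2 = b_\alpha(x,x)$ from below for an arbitrary nonzero $x \in I$, and then take the infimum. The key observation is that $b_\alpha(x,x)$ collapses to a trace of a totally positive element, so AM--GM will convert it into a norm, which is then controlled by $n_{A/K}(I)$.

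First I would simplify $b_\alpha(x,x)$. Since $\alpha \in K$ is central in $A$ and $x\bar{x} = n_{A/K}(x) \in K$, one has $tr_{A/K}(\alpha x \bar{x}) = 2\alpha\, n_{A/K}(x)$, so
\[
\|x\|^2 \;=\; Tr_{K/\Q}\!\bigl(tr_{A/K}(\alpha x \bar{x})\bigr) \;=\; 2\,Tr_{K/\Q}\!\bigl(\alpha\, n_{A/K}(x)\bigr) \;=\; 2\sum_{m=1}^{n}\sigma_m\!\bigl(\alpha\, n_{A/K}(x)\bigr).
\]
Next I would verify that each summand is positive. Because $A$ is totally definite, $n_{A/K}(x)$ is totally positive for every nonzero $x \in A$ (this is essentially the content of $A_\R \cong \mathbb{H}^n$: each $\sigma_m$-component becomes a norm on a copy of $\mathbb{H}$, which is positive), and by hypothesis $\alpha$ is totally positive. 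Hence each $\sigma_m(\alpha\, n_{A/K}(x)) > 0$, and AM--GM gives
\[
\sum_{m=1}^{n}\sigma_m\!\bigl(\alpha\, n_{A/K}(x)\bigr) \;\geq\; n\Bigl(\prod_{m=1}^{n}\sigma_m\!\bigl(\alpha\, n_{A/K}(x)\bigr)\Bigr)^{1/n} \;=\; n\,\bigl|N_{K/\Q}(\alpha)\,N_{K/\Q}(n_{A/K}(x))\bigr|^{1/n}.
\]

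Finally, I would bring in the ideal $n_{A/K}(I)$. By the very definition of the reduced norm of an ideal, $n_{A/K}(x)\,\I_K \subseteq n_{A/K}(I)$ for every $x\in I$; since containment of fractional ideals reverses absolute norms, this forces $|N_{K/\Q}(n_{A/K}(x))| \geq N_{K/\Q}(n_{A/K}(I))$. Combining the three steps yields $\|x\|^2 \geq 2n\,N_{K/\Q}(\alpha)^{1/n} N_{K/\Q}(n_{A/K}(I))^{1/n}$ for every nonzero $x\in I$, and taking the minimum over $I\setminus\{0\}$ gives the claim.

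The only mildly subtle point is the last step: one must be careful that $n_{A/K}(I)$ is being treated as a (possibly fractional) $\I_K$-ideal and that the implication ``$J \subseteq J'$ implies $N_{K/\Q}(J)$ is an integral multiple of $N_{K/\Q}(J')$'' is being applied correctly to $J = n_{A/K}(x)\I_K$ and $J' = n_{A/K}(I)$. Everything else is a direct computation plus AM--GM.
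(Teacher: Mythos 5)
Your proposal is correct and follows essentially the same route as the paper: reduce $b_\alpha(x,x)$ to $2\,Tr_{K/\Q}(\alpha\, n_{A/K}(x))$, check total positivity of the summands using total definiteness, apply AM--GM, and then bound $N_{K/\Q}(n_{A/K}(x))$ below by $N_{K/\Q}(n_{A/K}(I))$ via the containment $n_{A/K}(x)\I_K \subseteq n_{A/K}(I)$. The only cosmetic difference is in the last step, where the paper reduces the fractional case to the integral case by clearing denominators with some $y\in\I_K$, whereas you invoke the norm-reversal property for fractional ideals directly; both are valid.
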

\begin{proof}
    Let $0\neq x\in I$ be such that $\|\sigma(x)\|^2 = \|(I,b_\alpha)\|=Tr_{K/\Q}(tr_{A/K}(\alpha x\overline{x}))$. If we write $x = x_0 + x_1i + x_2j + x_3ij \in A$, then $tr_{A/K}(\alpha x\overline{x}) = tr_{A/K}(\alpha n_{A/K}(x))= 2\alpha (x^2_0- ax^2_1 -b x^2_2 +abx^2_3).$ Since $A$ is totally definite (in particular $K \subset \R$ and $\sigma_i(a),\sigma_i(b)<0$ for all $i$) and $\alpha$ is totally positive, then for each $i =1,\ldots, n$, 
    \begin{equation*}
        \begin{split}
            \sigma_i(tr_{A/K}(\alpha x\overline{x})) 
            &=\sigma_i( 2\alpha (x^2_0- ax^2_1 -b x^2_2 +abx^2_3))\\
            &=  2 \sigma_i(\alpha)( \sigma_i(x_0)^2-\sigma_i(a)\sigma_i(x_1)^2-\sigma_i(b)\sigma_i(x_2)^2 +\sigma_i(ab)\sigma_i(x_3)^2) \\&\geq 0.
        \end{split}
    \end{equation*} By the inequality of arithmetic and geometric means, we have that 
    \begin{equation*}
        \begin{split}
        \| \sigma(x)\|^2 
            &=Tr_{K/\Q}(tr_{A/K}(\alpha x\overline{x}))\\
            &= Tr_{K/\Q}(2\alpha n_{A/K}(x))\\
            &\geq n\left( N_{K/\Q}(2 \alpha n_{A/K}(x))\right)^\frac{1}{n}\\
            &=2n\ N_{K/\Q}(\alpha)^\frac{1}{n} N_{K/\Q}(n_{A/K}(x))^\frac{1}{n}.
        \end{split}
    \end{equation*}  
    
We are thus left to lower bound $N_{K/\Q}(n_{A/K}(x))$. 
Since $x \in I$, then $n_{A/K}(x) \in n_{A/K}(I)$ which is a fractional ideal in $K$. 

First, suppose that $I$ is integral, then $n_{A/K}(I) \subseteq \I_K$ since $n_{A/K}(\Lambda) = \I_K$. So $n_{A/K}(x)\I_K \subseteq n_{A/K}(I)\subseteq \I_K$ which implies that 
\begin{eqnarray*}
&& |\I_K/n_{A/K}(x)\I_K| \geq |\I_K/n_{A/K}(I)| \\
&\iff & N_{K/\Q}(n_{A/K}(x)\I_K) \geq N_{K/\Q}(n_{A/K}(I)). 
\end{eqnarray*}
    
Note that $N_{K/\Q}(n_{A/K}(x)\I_K) = |N_{K/\Q}(n_{A/K}(x))|= N_{K/\Q}(n_{A/K}(x))$ as $A$ is totally definite. Therefore, $N_{K/\Q}(n_{A/K}(x)) \geq N_{K/\Q}(n_{A/K}(I))$.

Suppose next that $I$ is not integral, then there exists $y \in \I_K$ such that $yI \subseteq \Lambda.$ Likewise, we have that $n_{A/K}(yI)\subseteq \I_K$ is an integral ideal which implies that $n_{A/K}(yx)\I_K \subseteq n_{A/K}(yI)\subseteq \I_K$ and thus 
\[ N_{K/\Q}(n_{A/K}(yx)) \geq N_{K/\Q}(n_{A/K}(yI)) \implies N_{K/\Q}(n_{A/K}(x))\geq N_{K/\Q}(n_{A/K}(I)).\]

\end{proof}
We denote by $\Q^\times_{>0}$ the set of positive rational numbers, which are totally positive in any number field $K$.

\begin{corollary}\label{Cor: shortest vector attained by reduced norm 1 elements}
Consider the ideal lattice $(\Lambda, b_\alpha)$ where $\Lambda$ is an $\I_K$-order of $A$ and $\alpha \in \Q^\times_{>0}$. 
Then
\[ 
\|(\Lambda,b_\alpha)\| = 2n\alpha,
\]
meaning that the lower bound for $\|(\Lambda,b_\alpha)\|$ given in Proposition \ref{Prop: Lower bound for Hermite invariant for quaternion algebra} is attained.
Furthermore, the lower bound is exactly attained by elements of $\Lambda^1$, the reduced norm one subgroup of $\Lambda$.  
\end{corollary}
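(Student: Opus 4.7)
The plan is to simplify the bound of Proposition \ref{Prop: Lower bound for Hermite invariant for quaternion algebra} when $\alpha$ is rational and the ideal is the order $\Lambda$ itself, then exhibit an explicit vector attaining it, and finally run the equality-case analysis of AM--GM backwards to force $n_{A/K}(x)=1$.

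First, I would specialize the lower bound. Because $\alpha\in\Q^\times_{>0}$, we have $N_{K/\Q}(\alpha)=\alpha^n$, so $N_{K/\Q}(\alpha)^{1/n}=\alpha$. Because $\Lambda$ is an $\I_K$-order containing $1$, we have $1=n_{A/K}(1)\in n_{A/K}(\Lambda)\subseteq \I_K$, whence $n_{A/K}(\Lambda)=\I_K$ and $N_{K/\Q}(n_{A/K}(\Lambda))=1$. Substituting into Proposition \ref{Prop: Lower bound for Hermite invariant for quaternion algebra} gives $\|(\Lambda,b_\alpha)\|\geq 2n\alpha$.

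Next I would exhibit attainment. Take $x=1\in\Lambda^1$, for which $n_{A/K}(1)=1$, and compute directly
\[
\|\sigma(1)\|^2=Tr_{K/\Q}(tr_{A/K}(\alpha\cdot 1))=Tr_{K/\Q}(2\alpha)=2n\alpha,
\]
using that $\alpha\in\Q$ makes $Tr_{K/\Q}(2\alpha)=2n\alpha$. Combined with the lower bound this gives $\|(\Lambda,b_\alpha)\|=2n\alpha$.

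Finally, for the ``furthermore'' clause, I would suppose $0\neq x\in\Lambda$ satisfies $\|\sigma(x)\|^2=2n\alpha$ and revisit both inequalities in the proof of Proposition \ref{Prop: Lower bound for Hermite invariant for quaternion algebra}. The chain
\[
2n\alpha=Tr_{K/\Q}(2\alpha\, n_{A/K}(x))\geq 2n\alpha\, N_{K/\Q}(n_{A/K}(x))^{1/n}\geq 2n\alpha
\]
must be an equality throughout. The second inequality is tight iff $N_{K/\Q}(n_{A/K}(x))=1$, forcing $n_{A/K}(x)$ to be a unit in $\I_K$. The first is the AM--GM inequality applied to the totally positive element $n_{A/K}(x)$ (its positivity under every $\sigma_m$ follows from $A$ being totally definite, exactly as in the proof of the proposition), and equality there forces all $\sigma_m(n_{A/K}(x))$ to coincide, so $n_{A/K}(x)\in\Q$. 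A totally positive rational of $\Q$-norm $1$ in $K$ is $1$ itself, so $n_{A/K}(x)=1$ and $x\in\Lambda^1$.

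The one subtle point is the equality case of AM--GM, since it requires total positivity of $n_{A/K}(x)$ for $x\neq 0$; this is the only step that needs care, but it is immediate from the totally definite hypothesis and the injectivity of the embeddings $\sigma_m$ on $K$.
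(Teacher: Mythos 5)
Your proof is correct and follows essentially the same route as the paper's: specialize the lower bound of Proposition \ref{Prop: Lower bound for Hermite invariant for quaternion algebra} using $N_{K/\Q}(\alpha)=\alpha^n$ and $n_{A/K}(\Lambda)=\I_K$, check attainment at $x=1$, and run the AM--GM equality case (together with $N_{K/\Q}(n_{A/K}(x))$ being a positive integer) to force $n_{A/K}(x)=1$. The only piece missing is the easy converse half of the ``furthermore'' claim --- that \emph{every} $x\in\Lambda^1$ attains the minimum, not just $x=1$ --- which the paper records explicitly and which follows from the same one-line computation since $x\overline{x}=n_{A/K}(x)=1$ gives $\|\sigma(x)\|^2=Tr_{K/\Q}(2\alpha)=2n\alpha$.
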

\begin{proof}
   We note that since $\Lambda$ is an order of $A$, then $n_{A/K}(\Lambda) = \I_K$ and thus $N_{K/\Q}(n_{A/K}(\Lambda)) = 1.$ By assumption, $\alpha \in \Q^\times$ and thus from Proposition \ref{Prop: Lower bound for Hermite invariant for quaternion algebra}
    \[ \|(\Lambda,b_\alpha)\| \geq 2n \alpha.\] Consider $1 \in \Lambda$, then 
    \[\|\sigma(1)\|^2 = Tr_{K/\Q}(tr_{A/K}(\alpha))= Tr_{K/\Q}(2\alpha) = 2n \alpha,
    \] 
which implies that $\|(\Lambda,b_\alpha)\|= 2n\alpha$.

Furthermore, we will show that $x\in \Lambda$ attains the lower bound if and only if $x \in \Lambda^1.$ 

First, let $x \in \Lambda$ such that $\|\sigma(x)\|^2 = \|(\Lambda,b_\alpha)\|$, then 
    \[ 2n\alpha = \|\sigma(x)\|^2 = \alpha Tr_{K/\Q}( tr_{A/K}(x\overline{x})) \implies 2n = Tr_{K/\Q}(tr_{A/K}(x\overline{x})).\] We have shown in the proof of the previous proposition that for each $i=1,\ldots, n$,  $\sigma_i(tr_{A/K}(x\overline{x})) = \sigma_i(tr_{A/K}(n_{A/K}(x))) > 0$ and thus by the inequality of arithmetic and geometric means, we get
\begin{eqnarray*}
2n\alpha &=& Tr_{K/\Q}(tr_{A/K}(\alpha n_{A/K}(x))) \\
&\geq & n\alpha N_{K/\Q}(tr_{A/K}(n_{A/K}(x)))^\frac{1}{n}\\
&=&2n\alpha N_{K/\Q}(n_{A/K}(x))^\frac{1}{n}.    
\end{eqnarray*}
This implies that $1\geq N_{K/\Q}(n_{A/K}(x))$. We note that $N_{K/\Q}(n_{A/K}(x))$ is a positive integer since $x \in \Lambda$, this means that $1= N_{K/\Q}(n_{A/K}(x))$. This forces the above inequality to be an equality, and equality in the inequality of arithmetic and geometric means implies that all terms are the same, namely
$\sigma_i(tr_{A/K}(\alpha n_{A/K}(x)))= 2\alpha\sigma_i(n_{A/K}(x))$ are equal for all $i=1,\ldots,n$ i.e., $\sigma_i(n_{A/K}(x))$ are all equal. So
\[ 
1= N_{K/\Q}(n_{A/K}(x))= \prod_{i=1}^n\sigma_i(n_{A/K}(x)) = \sigma_1(n_{A/K}(x))^n \iff \sigma_1(n_{A/K}(x))=1
\] 
so $\sigma_i(n_{A/K}(x))=1$ for all $i=1,\ldots,n$ which implies that $n_{A/K}(x)= 1$. Therefore, $x\in\Lambda^1$ as desired.

Conversely, suppose that $x \in \Lambda^1$. It is enough to show that $Tr_{K/\Q}(tr_{A/K}(\alpha x\overline{x})) = 2n\alpha.$ Indeed
    \begin{equation*}
        \begin{split}
            Tr_{K/\Q}(tr_{A/K}(\alpha x\overline{x})) &= \alpha Tr_{K/\Q}(tr_{A/K}(n_{A/K}(x)))\\
            &= 2\alpha Tr_{K/\Q}(1)\\
            &= 2 n \alpha.
        \end{split}
    \end{equation*}
\end{proof}

The next example motivates the study of well-rounded ideal lattices from totally definite quaternion algebras by showing they exist.

\begin{example}\label{Example: Hurwitz order over the rationals}
    Let $A = \left( \frac{-1,-1}{\Q}\right)$ be a totally definite quaternion algebra over $\Q$ and consider the well-known Hurwitz order $\Lambda = \Z  + \Z i  + \Z j + \Z \left(\frac{1+i+j+ij}{2} \right)$ with $\Z$-basis $S= \set{1,i,j,\frac{1+i+j+ij}{2}}\subseteq \Lambda^1$. 
    
    Fix a totally positive element $\alpha \in \Q^\times_{>0}$, then $\sigma(S)$ contains the following vectors
    \begin{equation*}
        \begin{split}
            \sigma(1) &= \sqrt{2}(\sqrt{\alpha},0,0,0)\\
            \sigma(i) &=\sqrt{2}(0,\sqrt{\alpha},0 ,0) \\
            \sigma(j) &= \sqrt{2}(0,0,\sqrt{\alpha},0) \\
            \sigma\left(\frac{1+i+j+ij}{2}\right) &= \sqrt{2}\left(\frac{\sqrt{\alpha}}{2},\frac{\sqrt{\alpha}}{2},\frac{\sqrt{\alpha}}{2},\frac{\sqrt{\alpha}}{2}\right)
        \end{split}
    \end{equation*} which is clearly linearly independent over $\R$. The lattice points in $\sigma(S)$  all have a squared norm of $2\alpha$ which is the lower bound and thus $(\Lambda,b_\alpha)$ is well-rounded.
A generator matrix for this lattice is thus
\[
\sqrt{2\alpha}
\begin{bmatrix}
1 & 0 & 0 & 0 \\
0 & 1 & 0 & 0 \\
0 & 0 & 1 & 0 \\
1/2 & 1/2 & 1/2 & 1/2 \\
\end{bmatrix}.
\]
This is a generator matrix for the lattice $D_4$, scaled by a factor of $\sqrt{2\alpha}$.
\end{example}

\subsection{Characterization of well-rounded lattices}

The next result characterizes 
well-rounded lattices coming from an ideal lattice $(\Lambda,b_\alpha)$ where $\Lambda$ is an $\I_K$-order and $\alpha \in \Q^\times_{>0}$.

\begin{proposition}\label{Lemma: well rounded if and only if basis}
Let $A$ be a totally definite quaternion algebra over the number field $K$ of degree $n$, $\alpha\in \Q^\times_{>0}$ and $\Lambda$ be an $\I_K$-order of $A$. The ideal lattice $(\Lambda,b_\alpha)$ is well-rounded if and only if  $\Lambda^1$ contains a $\Q$-basis for $A$.
\end{proposition}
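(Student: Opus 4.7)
The plan is to reduce the statement to a linear algebra reformulation, using Corollary \ref{Cor: shortest vector attained by reduced norm 1 elements} as the key input. That corollary tells us that $\|(\Lambda,b_\alpha)\| = 2n\alpha$ and, more importantly, characterizes exactly which elements of $\Lambda$ achieve this minimum: namely, the elements of $\Lambda^1$. Hence the set of minimum vectors of the lattice is precisely $\sigma(\Lambda^1)$, and by definition the lattice is well-rounded precisely when $\sigma(\Lambda^1)$ contains an $\R$-basis of $\R^{4n}$.

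I would then transport this condition along the canonical embedding. Since $\sigma = \psi\circ\varphi\circ\iota$, with $\psi\circ\varphi:A_\R \to \R^{4n}$ an $\R$-vector space isomorphism, the condition ``$\sigma(\Lambda^1)$ contains an $\R$-basis of $\R^{4n}$'' is equivalent to ``$\Lambda^1$, viewed inside $A_\R = A\otimes_\Q \R$ via $\iota$, contains an $\R$-basis of $A_\R$''. Finally, since $\Lambda^1 \subseteq A$ and $\dim_\Q A = \dim_\R A_\R = 4n$, any collection of elements of $\Lambda^1$ is $\R$-linearly independent in $A_\R$ if and only if it is $\Q$-linearly independent in $A$; a maximal such collection thus consists of $4n$ elements and forms simultaneously an $\R$-basis of $A_\R$ and a $\Q$-basis of $A$.

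Chaining these equivalences gives the result. I expect no serious obstacle: the substantive content is already encapsulated in Corollary \ref{Cor: shortest vector attained by reduced norm 1 elements}, which isolates $\Lambda^1$ as exactly the set of lattice vectors of minimum squared norm. The only point requiring explicit justification is the last equivalence between $\Q$-linear independence in $A$ and $\R$-linear independence in $A_\R$, which I would state briefly as a consequence of $A_\R$ being the base change of $A$ along $\Q \hookrightarrow \R$: a $\Q$-basis of $A$ tensored with $1$ is an $\R$-basis of $A_\R$, and conversely $\R$-linear relations among elements of $A \subseteq A_\R$ restrict to $\Q$-linear relations.
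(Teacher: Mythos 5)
Your proposal is correct and follows essentially the same route as the paper: both reduce the statement to Corollary \ref{Cor: shortest vector attained by reduced norm 1 elements} (which identifies the minimum vectors of $(\Lambda,b_\alpha)$ as exactly $\sigma(\Lambda^1)$), transport the well-roundedness condition through the $\R$-isomorphism $\psi\circ\varphi: A_\R\to\R^{4n}$, and conclude via the equivalence of $\Q$-linear independence in $A$ with $\R$-linear independence in $A_\R=A\otimes_\Q\R$. If anything, your write-up makes the converse direction slightly more explicit than the paper's, which phrases it only as a final ``if and only if.''
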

\begin{proof}

Suppose $(\Lambda,b_\alpha)$ is well-rounded. Recall that $\Lambda$ is a free $\Z$-module of rank $4n$. Then, by definition, there exists a subset $S= \set{x_1,\ldots,x_{4n}}$ such that $\sigma(S)=\set{\sigma(x_1),\ldots,\sigma(x_{4n})}$ is a set of minimal vectors which forms a $\R$-basis for $\R^{4n}$. By Corollary \ref{Cor: shortest vector attained by reduced norm 1 elements},  $S\subseteq \Lambda^1$. Since $A_\R \cong \R^{4n}$ as an $\R$-vector space under the isomorphism $\psi: \mathbb{H}^n\rightarrow \R^{4n}$, $\psi^{-1}(\sigma(S)) = \set{x_1\tens{}1,\ldots,x_{4n}\tens{}1}$ is an $\R$-basis for $A_\R$ and thus linearly independent over $\Q$ in $A_\R.$ This set spans $A\tens{} 1\subseteq A_\R$ where $A$ is of dimension $4n$ over $\Q$ and thus is a $\Q$-basis for $A \tens{} 1$ if and only if $S$ is a $\Q$-basis for $A.$ 
 \end{proof}

Since the existence of well-rounded lattices depends on understanding $\Lambda^1$ for $\Lambda$ an $\I_K$-order of $A$, we recall the following classification results.

\begin{lemma}[\cite{maire2006cancellation} \label{Lemma: classification of reduced norm 1 group}Classification of reduced norm 1 subgroup of an order]
 Let $\Lambda$ be an order of a totally definite quaternion algebra. Then $\Lambda^1$ is isomorphic to one of the following groups (the choice of $m$ will be discussed next):

\begin{tabular}{|c|c|} \hline
            Groups & Presentation  \\ \hline
            Cyclic of order $2m$& $\langle x| x^{2m} = 1 \rangle$\\ \hline
            Binary dihedral of order $4m$& $\langle x,y| y^{2m} =1, x^2=y^m, xyx^{-1}=y^{-1}\rangle$\\ \hline
            Binary tetrahedral of order 24 & $\langle x,y,z| x^2=y^3=z^3= xyz \rangle$\\ \hline
            Binary octahedral of order 48 & $\langle x,y,z|x^2=y^3=z^4 = xyz\rangle$\\ \hline
            Binary icosahedral of order 120 & $\langle x,y,z| x^2=y^3=z^5 =xyz\rangle$\\ \hline
       \end{tabular}
       \label{tab:placeholder}\\
       
The last three groups are also called exceptional groups and for each of these groups,  $(xyz)^2 = 1$ holds. 
\end{lemma}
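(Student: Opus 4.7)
The plan is to reduce the classification to the well-known classification of finite subgroups of $SU(2) \cong \mathbb{H}^1$, which I would carry out in three steps.

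First, I would show finiteness of $\Lambda^1$. Under the $\R$-algebra isomorphism $\varphi: A_\R \to \mathbb{H}^n$ (available because $A$ is totally definite), every $x \in \Lambda^1$ lands in the compact group $(\mathbb{H}^1)^n$, since $\sigma_m(x)\overline{\sigma_m(x)} = \sigma_m(n_{A/K}(x)) = 1$ in each factor. Meanwhile $\Lambda$ is a free $\Z$-module of rank $4n$ inside $A_\R \cong \R^{4n}$, hence a lattice, so $\varphi(\Lambda)$ is discrete. Therefore $\varphi(\Lambda^1) = \varphi(\Lambda) \cap (\mathbb{H}^1)^n$ is a discrete subset of a compact topological group, and hence finite.

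Second, I would exhibit $\Lambda^1$ as a finite subgroup of a single copy of $\mathbb{H}^1$ by showing that projection onto the first factor, $\pi_1$, is injective on $\varphi(\Lambda^1)$. If $x = x_0 + x_1 i + x_2 j + x_3 ij \in \Lambda^1$ has $\pi_1(\varphi(x)) = 1$, then the coordinate formula underlying Definition~\ref{Def: canonical embedding}, together with the $\R$-linear independence of $\{1, i', j', i'j'\}$ in $\mathbb{H}$ and the non-vanishing of $\sqrt{-\sigma_1(a)}$, $\sqrt{-\sigma_1(b)}$, $\sqrt{\sigma_1(ab)}$, forces $\sigma_1(x_0) = 1$ and $\sigma_1(x_k) = 0$ for $k = 1, 2, 3$. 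Since $\sigma_1 : K \hookrightarrow \R$ is injective, this forces $x = 1$, so $\Lambda^1$ embeds as a finite subgroup of $\mathbb{H}^1 \cong SU(2)$.

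Third, I would invoke Klein's classical classification of finite subgroups of $SU(2)$: every such subgroup is isomorphic to a cyclic group, a binary dihedral group, or one of the three exceptional binary polyhedral groups (tetrahedral of order $24$, octahedral of order $48$, icosahedral of order $120$). The even orders $2m$ and $4m$ appearing in the statement follow by noting that $n_{A/K}(-1) = 1$ places $-1$ in $\Lambda^1$ as an element of order $2$, and $-I$ is the unique element of order $2$ in $SU(2)$; hence cyclic $\Lambda^1$ must have even order $2m$, and the central involution $y^m$ in the binary dihedral presentation (central because $xyx^{-1} = y^{-1}$ combined with $y^{2m} = 1$) is identified with $-1$, forcing total order $4m$. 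In the three exceptional cases the group already contains a unique central involution, automatically identified with $-1$.

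The chief obstacle is the third step, which I would simply cite (it is a classical theorem, part of the ADE correspondence) rather than reprove. The most delicate verification is the injectivity in the second step; this is where totally definiteness is essential, since otherwise a factor of $A_\R$ could be $M_2(\R)$, whose unit group is non-compact, breaking both the finiteness argument in the first step and the uniform coordinate description used for injectivity.
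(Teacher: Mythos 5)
The paper does not prove this lemma---it is quoted directly from \cite{maire2006cancellation}---so there is no internal argument to compare against; your reconstruction is the standard proof of the cited result and it is correct. All three steps are sound: discreteness of $\varphi(\Lambda)$ plus compactness of $(\mathbb{H}^1)^n$ gives finiteness; injectivity of $\pi_1\circ\varphi$ on $\Lambda^1$ (which also follows immediately from $A$ being a simple, indeed division, algebra, so any nonzero algebra homomorphism out of it is injective) realizes $\Lambda^1$ as a finite subgroup of $SU(2)$; and your use of $-1\in\Lambda^1$ correctly rules out the odd-order cyclic subgroups that Klein's classification would otherwise permit, accounting for the orders $2m$ and $4m$ in the table.
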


The following theorem from $\cite{maire2006cancellation}$ classifies quaternion algebras and corresponding number fields.
\begin{theorem}[\cite{maire2006cancellation}]\label{Thm: Classification of groups}
    Let $A$ be a totally definite quaternion algebra over $K$. Then $A^\times$ contains a group isomorphic to 
    \begin{enumerate}
        \item a binary tetrahedral, octahedral or icosahedral group if and only if $K$ contains $\Q, \Q(\sqrt{2}),\Q(\sqrt{5})$ respectively and $A = \left(\frac{-1,-1}{K}\right)$;
        \item a binary dihedral group of order $4m$ where $m \geq 2$ if and only if $K$ contains $\Q(\zeta_{2m}+\zeta^{-1}_{2m})$ and $A = \left( \frac{-1,t^2-4}{K}\right)$ where $t = Tr_{K(\zeta_{2m})/K}(\zeta_{2m})$.
    \end{enumerate}
\end{theorem}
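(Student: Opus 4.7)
The key observation is that every $x \in A^\times$ satisfies its characteristic quadratic $x^2 - tr_{A/K}(x)x + n_{A/K}(x) = 0$, so any finite order element $g$ with $g^k = 1$ has minimal polynomial over $K$ of degree at most $2$ dividing $X^k - 1$. This forces $tr_{A/K}(g) = \zeta + \zeta^{-1}$ for some root of unity $\zeta$, and hence $\zeta + \zeta^{-1} \in K$. I would use this both to identify the subfield of $K$ forced by the group and, combined with anti-commuting pairs of trace-zero elements, to identify the algebra $A$ up to isomorphism.

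\textbf{Forward direction, exceptional groups.} Suppose $G \hookrightarrow A^\times$ is one of the three exceptional groups. In each case the unique element of order $2$ is central and must correspond to $-1 \in A^\times$. Any $g \in G$ of order $4$ then satisfies $g^2 = -1$, so $tr_{A/K}(g) = 0$. The Sylow $2$-subgroup of $G$ is $Q_8$ for binary tetrahedral and icosahedral, and $Q_{16}$ for binary octahedral (which still contains $Q_8$); this provides $i, j \in G$ with $i^2 = j^2 = -1$ and $ij = -ji$, yielding $A \cong \left( \frac{-1,-1}{K} \right)$. To pin down $K$, I would examine an element of largest order in $G$: $2T$ contains elements of order at most $6$, giving only $\zeta_3 + \zeta_3^{-1} = -1 \in \Q$; $2O$ contains an element of order $8$, forcing $\zeta_8 + \zeta_8^{-1} = \sqrt{2} \in K$; $2I$ contains an element of order $10$, forcing $\zeta_{10} + \zeta_{10}^{-1} = (1+\sqrt{5})/2 \in K$, hence $\sqrt{5} \in K$.

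\textbf{Forward direction, binary dihedral.} Suppose $G \hookrightarrow A^\times$ is binary dihedral of order $4m$ with generators $x, y$ as in Lemma \ref{Lemma: classification of reduced norm 1 group}. Since $y$ has order $2m$, its characteristic polynomial over $K$ is $Y^2 - tY + 1$ with $t = \zeta_{2m} + \zeta_{2m}^{-1} = Tr_{K(\zeta_{2m})/K}(\zeta_{2m}) \in K$. Setting $i' := 2y - t$ gives $(i')^2 = t^2 - 4$, which is totally negative because $A$ is totally definite. The relation $xyx^{-1} = y^{-1}$ translates to $x i' x^{-1} = -i'$, while $x^2 = y^m = -1$ gives $(j')^2 = -1$ for $j' := x$. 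Hence $\{ 1, j', i', j' i' \}$ is a $K$-basis exhibiting $A \cong \left( \frac{-1, t^2 - 4}{K} \right)$.

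\textbf{Converse directions and main obstacle.} For the converses, given the stated hypotheses on $K$ and $A$, I would explicitly exhibit the group using classical constructions: Hurwitz-type units such as $(1 + i + j + ij)/2$ for binary tetrahedral, with analogues involving $\sqrt{2}$ or $(1+\sqrt{5})/2$ for binary octahedral and icosahedral; for binary dihedral, take $y := (t + i')/2$ with $(i')^2 = t^2 - 4$ and $x := j'$, then verify the three defining relations from the presentation. The main obstacle is the forward direction for the exceptional groups: producing a specifically anti-commuting pair $i, j$ (rather than two unrelated square roots of $-1$) requires exploiting the $Q_8$ Sylow subgroup structure inside $G$, and correctly identifying which cyclic subgroups of largest order actually occur (and not merely divide $|G|$) in order to extract the minimal totally real subfield of $K$.
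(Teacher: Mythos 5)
The paper does not prove this statement: it is imported verbatim from \cite{maire2006cancellation} (and ultimately rests on the classical classification of finite subgroups of division algebras), so there is no in-paper argument to compare yours against. Judged on its own, your reconstruction is essentially the standard proof and is sound in outline. The key mechanism is exactly right: a torsion element $g\neq\pm1$ of a totally definite (hence division) quaternion algebra has irreducible quadratic minimal polynomial dividing $X^k-1$; since under any real embedding of $K$ its two roots are non-real and complex-conjugate, the polynomial is $X^2-(\zeta_k+\zeta_k^{-1})X+1$, forcing $\zeta_k+\zeta_k^{-1}\in K$ and $n_{A/K}(g)=1$ --- this one small step (why the constant term is $1$, equivalently why the second root is $\zeta^{-1}$ rather than some other root of unity) is worth writing out. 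Your use of the quaternion Sylow $2$-subgroup to extract an anticommuting pair $i,j$ with $i^2=j^2=-1$, and of the maximal element orders ($6$, $8$, $10$) to pin down $\Q$, $\Q(\sqrt{2})$, $\Q(\sqrt{5})$, is the right argument for the exceptional cases; the computation $i'=2y-t$, $(i')^2=t^2-4$, $xi'x^{-1}=-i'$ correctly handles the binary dihedral case (one should also note $t^2-4$ is automatically totally negative since every conjugate of $t$ lies in $(-2,2)$, and that $\{1,i',x,i'x\}$ is a $K$-basis because $x\notin K(y)$). The only genuinely incomplete part is the converse: exhibiting the Hurwitz units, the octahedral units involving $(1+i)/\sqrt{2}$, and the icosians, and verifying they realize the presentations of Lemma \ref{Lemma: classification of reduced norm 1 group}, is only gestured at; this is classical but is where most of the remaining work lives. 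None of this affects the paper, which uses the theorem as a black box.
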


Equipped with the above classification results, we will next characterize when $\Lambda^1$ contains a $\Q$-basis for $A$ (corresponding to a well-rounded lattice).
To do so, we will use that $\Lambda^1$ contains a $\Q$-basis for $A$ if and only if $\Q\langle \Lambda^1\rangle = A$,
if and only if 
$\Q\langle S\rangle = A$
for $S\subseteq \Lambda^1$  a generating set for $\Lambda^1$ (since then $\Q\langle \Lambda^1\rangle = \Q\langle S\rangle$).
We will furthermore rely on the fact that since $A$ is totally definite, it is a division algebra over $K$,  which can be viewed as a division algebra over $\Q$, in which case it has dimension $4[K:\Q]$.  When considering a subfield $L$ of $A$, we make a distinction by denoting $L$ as a $K$-subfield if we view $A$ as a $K$-algebra and a $\Q$-subfield if we view $A$ as a $\Q$-algebra. 

Many of our arguments will rely on considering elements in $\Lambda^1$ for $\Lambda$ a $\I_K$-order, and look at the subfield these elements generate. It is thus useful to recall that given a field $F$:
\begin{enumerate}
\item 
For any $F$-algebra $A$ and any $a\in A$, 
we have a notion of minimal polynomial \cite[p.2]{Reiner} 
$m_{a,F}$ which is the unique monic polynomial in  $F[X]$ of least degree such that $a$ is a solution to this polynomial. If, furthermore, $A$ is a division algebra, then \cite[p.9]{drozd2012finite} 
$m_{a,F}$ is irreducible.
\item    
For any division $F$-algebra $A$ and $a \in A$, the $F$-subalgebra $\set{p(a)|p\in F[X]}$ is a subfield of $A$ \cite[p.9]{drozd2012finite}\label{Lemma: minimal polynomials are irreducible in division algebra} which we denote by $F(a)$, and can be viewed as a finite field extension of $F$.
\item For any division $\Q$-algebra $A$, an element $a\in A$ of order $m$ generates a subfield $\Q(a)$ such that $[\Q(a):\Q] = \varphi(m)$ where $\varphi$ is the Euler Totient function.
\end{enumerate}

We first prove that well-rounded lattices cannot come from a reduced norm one group that is cyclic.
In the lemmas below, $A$ is always a totally definite quaternion algebra over a number field $K$, and $\Lambda$ always an $\I_K$-order of $A$.   
\begin{lemma}\label{Lemma: cyclic groups cannot generate the whole algebra}
       If $\Lambda^1$ is cyclic, then $\Q\langle \Lambda^1 \rangle \neq A$.
   \end{lemma}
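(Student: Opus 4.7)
My plan is to exploit the fact that any cyclic subgroup of the multiplicative group of a division algebra lies inside a commutative subfield, whereas $A$ itself is noncommutative.

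First, I would observe that if $\Lambda^1$ is cyclic, say $\Lambda^1 = \langle x \rangle$, then every element of $\Lambda^1$ is a power of $x$. The $\Q$-span $\Q\langle \Lambda^1 \rangle$ therefore coincides with the $\Q$-subalgebra generated by $x$, namely $\{p(x) \mid p \in \Q[X]\}$. Since $A$ is a totally definite quaternion algebra over the number field $K$, it is a division $\Q$-algebra (it has no zero divisors). By the facts recalled just before the lemma, in particular that $\{p(a) \mid p \in F[X]\}$ is a subfield of any division $F$-algebra, this subalgebra is in fact the subfield $\Q(x)$ of $A$.

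To conclude, I only need to note that $\Q(x)$ is commutative, while $A$ is not: the defining relation $ij = -ji$ together with $\operatorname{char} K \neq 2$ and the fact that $ij \neq 0$ in the division algebra force $ij \neq ji$, so $A$ is genuinely noncommutative. Hence $\Q\langle \Lambda^1 \rangle = \Q(x) \subsetneq A$, as required.

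The only step that needs a small amount of care is the identification of $\Q\langle \Lambda^1 \rangle$ with the subalgebra generated by $x$, which is immediate since every element of $\Lambda^1$ is a monomial in $x$ and, conversely, every $\Q$-linear combination of such monomials lies in $\Q\langle \Lambda^1 \rangle$. Beyond this, the argument is essentially a direct appeal to the recalled background facts, so I do not anticipate any genuine obstacle.
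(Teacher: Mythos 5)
Your proof is correct and follows essentially the same route as the paper: both reduce to the observation that a cyclic $\Lambda^1$ generates a commutative subfield of the division algebra $A$, which therefore cannot be all of $A$. The only difference is the concluding step --- you invoke noncommutativity of $A$ directly, whereas the paper works over $K$ and notes that every element of $A$ satisfies a quadratic polynomial over $K$ (its reduced characteristic polynomial), so $[K(x):K]\leq 2<4=\dim_K A$; both conclusions are valid.
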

   \begin{proof}
Let $\Lambda^1 = \langle x \rangle$, 
so the $K$-algebra $K\langle \Lambda^1\rangle = K(x)$ is a $K$-subfield of $A$. 
Since any element $a \in A$ is a root of the polynomial $X^2 - tr_{A/K}(a)X + n_{A/K}(a) \in K[X]$, $[K(x):K]\leq 2$ which implies that $K(x)\neq A$. Therefore, $\Q\langle \Lambda^1\rangle \subseteq K\langle \Lambda^1\rangle \neq A$.

\end{proof}

When $\Lambda^1$ is isomorphic to the exceptional groups, the dimension of the $\Q$-algebra $\Q\langle \Lambda^1\rangle$ can be determined.

\begin{lemma}\label{Lemma: subfield can only contain one generator}
    Suppose $\Lambda^1$ is isomorphic to an exceptional group  stated in Lemma \ref{Lemma: classification of reduced norm 1 group}. Any $\Q$-subfield $L$ of $A$  contains at most one of its  generators. Furthermore, any $\Q$-subalgebra of $A$ containing two of its generators contains the entire group.
\end{lemma}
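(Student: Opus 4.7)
The plan is to reduce both assertions to statements about the multiplicative structure of the exceptional group $\Lambda^1$ inside the division algebra $A$. The crucial observation I will exploit is that a $\Q$-subalgebra of $A$ generated by commuting elements is itself commutative, hence, being a subring of a division algebra containing $1$, a subfield; therefore any finite subgroup of its unit group is cyclic. Consequently, if two generators $g_i, g_j \in \{x,y,z\}$ commute in $A^\times$, the subgroup $\langle g_i, g_j \rangle \subseteq \Lambda^1$ is cyclic, so its order is at least $\mathrm{lcm}(|g_i|,|g_j|)$ and at most the maximum element order of $\Lambda^1$.

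For the first assertion, any $\Q$-subfield $L$ is commutative, so two generators lying in $L$ must commute in $A^\times$. Using the presentations together with $(xyz)^2 = 1$, the generators $(x,y,z)$ have orders $(4,6,6)$, $(4,6,8)$, $(4,6,10)$ in the binary tetrahedral, octahedral, and icosahedral groups respectively, with maximum element orders $6, 8, 10$. Comparing $\mathrm{lcm}$s with these bounds rules out seven of the nine possible commuting pairs immediately. The two remaining near-misses are $(y,z)$ in the tetrahedral case and $(x,z)$ in the octahedral case; in each, the cyclic subgroup they generate must be a maximal cyclic subgroup containing one of them, so one generator is forced to be a specific power of the other ($z = y^{\pm 1}$ and $x = z^{\pm 2}$ respectively). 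Substituting into the common relation $xyz = x^2$ then forces $x$ to coincide with a low power of $y$ or $z$, contradicting the prescribed element orders.

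For the second assertion, I would exploit the fact that the relation $x^2 = xyz$, common to all three presentations, simplifies to $x = yz$ after left-multiplication by $x^{-1}$. If a $\Q$-subalgebra $B$ of $A$ contains two generators, the third lies in $B$: directly as $yz$ when the pair is $(y,z)$, and as $y^{-1}x$ or $xz^{-1}$ otherwise, using that $y$ and $z$ are torsion so their inverses are positive powers of themselves and hence lie in $B$. Closure of $B$ under multiplication then places every word in $x, y, z$ into $B$, giving $B \supseteq \langle x,y,z\rangle = \Lambda^1$. The main obstacle throughout is handling the two near-miss pairs above: each requires an explicit lifting from the commuting relation in the quotient $\Lambda^1 / \{\pm 1\}$ to a concrete equation in $\Lambda^1$, followed by a careful substitution into the defining presentation to derive a contradiction with the prescribed orders.
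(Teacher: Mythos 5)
Your proof is correct, but for the first assertion it takes a genuinely different and considerably longer route than the paper. The paper handles both assertions with the same one-line mechanism: the common relation $x^2=xyz$ gives $x=yz$, so any $\Q$-subfield (or $\Q$-subalgebra) containing two of the generators contains the third, hence contains all of $\Lambda^1$; for a subfield this is immediately absurd because $\Lambda^1$ is non-abelian while a field is commutative. In other words, the first assertion is a direct corollary of the second, and no information about element orders is needed. You instead prove the first assertion by observing that two generators inside a subfield must commute, that a finite subgroup of the multiplicative group of a field is cyclic, and then running an order/\textsc{lcm} case analysis across the nine pairs, with explicit handling of the two near-miss pairs $(y,z)$ in the binary tetrahedral case and $(x,z)$ in the binary octahedral case. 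I checked your numerics: the generator orders $(4,6,6)$, $(4,6,8)$, $(4,6,10)$, the maximal element orders $6,8,10$, the elimination of seven pairs, and the contradictions $z=y^{\pm1}\Rightarrow x\in\{y^2,1\}$ and $x=z^{\pm2}\Rightarrow y\in\{z,z^{-3}\}$ are all right, so the argument goes through. Your treatment of the second assertion coincides with the paper's, and you are in fact more careful than the paper in noting that $y^{-1}=y^{|y|-1}$ lies in the subalgebra (the paper instead invokes that any $\Q$-subalgebra of a division algebra is division). What your approach buys is extra structural information (which pairs of generators can even commute); what it costs is length and reliance on the specific element orders, whereas the paper's argument uses only the single relation $x=yz$ and non-abelianness. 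You could shorten your write-up substantially by deriving the first assertion from the second exactly as the paper does.
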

\begin{proof}
    The three exceptional groups all have three generators $x,y,z$, satisfying in particular $x^2=xyz$. Let $\Lambda^1$ be isomorphic to one of these three groups. Suppose that $L$ is a $\Q$-subfield that contains $x,y.$ Then
    \[ x^2 = xyz \iff  z = y^{-1}x \in L.\] So $x,y,z \in L$ which implies $\Lambda^1 \subset L$ which is a contradiction since $L$ is a field but $\Lambda^1$ is a non-abelian group.

    Let $B$ be a $\Q$-subalgebra of $A$ containing $x,y$. We know that $A$ is a $\Q$-division algebra since it is totally definite and any $\Q$-subalgebra of $A$ is also division. Then $z = y^{-1}x \in B$ and so, $\Lambda^1 \subset B.$
\end{proof}
\begin{proposition}
    \label{Lemma: subspace is infact an algebra}
   Suppose $\Lambda^1$ is isomorphic to an exceptional group stated in Lemma \ref{Lemma: classification of reduced norm 1 group}. The  $\Q$-vector space $B= \Q(z)\oplus{}\Q(z)y$ is a $\Q$-algebra. In particular, $\Q\langle \Lambda^1 \rangle = \Q\langle x,y,z\rangle = B. $
\end{proposition}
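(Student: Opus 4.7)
The plan is to show that $B$ is closed under right multiplication by both $y$ and $z$, which will force $B = \Q\langle y,z\rangle$; absorbing $x = yz$ then yields $\Q\langle\Lambda^1\rangle = B$. The key technical step is to extract, from the group relations, an identity of the form $yz = z^{-1}(y-1)$, leveraging that $A$ is a division algebra.

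First I would verify that the sum $\Q(z) \oplus \Q(z)y$ is direct: if $\beta y \in \Q(z)$ for some nonzero $\beta \in \Q(z)$, then $y = \beta^{-1}(\beta y) \in \Q(z)$, placing the two generators $y, z$ of the exceptional group in the single $\Q$-subfield $\Q(z)$, contradicting Lemma \ref{Lemma: subfield can only contain one generator}. Next I would pin down the element $w := x^2 = y^3 = z^k = xyz$ of $\Lambda^1$ (where $k \in \{3,4,5\}$): since $w^2 = 1$, its minimal polynomial over $\Q$ is irreducible (as $A$ is a division algebra) and divides $(X-1)(X+1)$, so $w \in \{1,-1\}$. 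If $w = 1$, the relations $x^2 = y^3 = z^k = 1$ would collapse $\Lambda^1$ onto the non-binary triangle quotient $A_4$, $S_4$, or $A_5$, of half the order, contradicting $|\Lambda^1| \in \{24, 48, 120\}$. Thus $w = -1$, giving $x = yz$; from $y^3 = -1$ and $y \neq -1$, $y$ satisfies the irreducible factor $X^2 - X + 1$ of $X^3 + 1$, yielding $y^2 = y - 1$ and $y^{-1} = -y^2$; and $(yz)^2 = -1$ rearranges to $yzy = -z^{-1}$, so
\[ yz \;=\; -z^{-1}y^{-1} \;=\; z^{-1}y^2 \;=\; z^{-1}(y-1). \]

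With this identity in hand, the closures
\begin{align*}
(\alpha + \beta y)y &= -\beta + (\alpha + \beta)y, \\
(\alpha + \beta y)z &= (\alpha z - \beta z^{-1}) + (\beta z^{-1})y
\end{align*}
for $\alpha, \beta \in \Q(z)$ both lie in $B$ by direct computation. Iterating and using $\Q$-linearity gives $B \cdot \Q\langle y, z\rangle \subseteq B$, and since $1 \in B$ this forces $\Q\langle y, z\rangle \subseteq B$; the reverse containment is clear from the definition of $B$. Hence $B = \Q\langle y, z\rangle$ is a $\Q$-subalgebra of $A$. Finally, using $x = yz$ and that $x^{-1}, y^{-1}, z^{-1}$ are polynomials in the generators, one obtains $\Q\langle \Lambda^1\rangle = \Q\langle x, y, z\rangle = \Q\langle y, z\rangle = B$. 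The main obstacle I anticipate is isolating the commutation identity $yz = z^{-1}(y-1)$; once it is in place, the rest is essentially bookkeeping.
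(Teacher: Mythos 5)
Your proof is correct and follows essentially the same route as the paper's: both rest on the directness of $\Q(z)\oplus\Q(z)y$ via Lemma~\ref{Lemma: subfield can only contain one generator} and on the key identity $yz = z^{-1}(y-1)$ derived from $x=yz$, $x^2=y^3$ and $y^2=y-1$, followed by a closure-under-right-multiplication argument (the paper phrases this as an induction on $yz^k\in B$, which is the same computation). Your explicit derivation that $x^2=-1$, hence $y^3=-1$, is a slightly more careful justification of the paper's bare assertion that $y$ has order $6$, but the substance is identical.
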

\begin{proof}
Let $x,y,z$, be the generators of the three exceptional groups. By Lemma \ref{Lemma: subfield can only contain one generator}, $y\not\in \Q(z)$, therefore $B = \Q(z)\oplus{}\Q(z)y$ is a $\Q$-vector subspace of $A.$

To obtain a $\Q$-algebra, we need to prove that $yz^k \in B$, $k \geq 1$, since we can then extend the multiplication  linearly to $y (\sum_{i=0}^k q_iz^i) = \sum_{i=0}^k q_iyz^i \in B$ for any $\sum_{i=0}^k q_iz^i \in \Q(z)$, so, for any $a_1+b_1y, a_2+b_2y \in B$, $a_1,a_2,b_1,b_2 \in \Q(z)$, we obtain 
    \[(a_1+b_1y)(a_2+b_2y)= a_1a_2+a_1b_2y + b_1(ya_2) + b_1(yb_2)y \in B.\]
   
We prove that $yz^k\in B$, $k \geq 1$, by induction on $k$.    
For the base case, we show that $yz \in B$. Since $x^2= xyz$ holds for the three exceptional groups, we get $x = yz$, so $x^2 = (yz)^2 = y(zy)z.$ Since $x^2=  y^3$ for the three exceptional groups, then $y^3 = y(zy)z \implies y^2 = zyz \implies z^{-1}y^2 = yz$. Since $y$ is of order $6$, its minimal polynomial over $\Q$ is $\Phi_6 = X^2-X+1$ and thus $y^2 = y- 1$. Therefore, 
    \[yz = z^{-1}y^2 = z^{-1}(y-1) =  - z^{-1} + z^{-1}y \in \Q(z)\oplus{}\Q(z)y\]
which proves the base case.

Suppose that $yz^k \in B$ for some $k > 1$. 
Then
    \[ yz^{k+1} = (yz^k)z = (a_1+b_1y)z = a_1z + b_1yz\] where $yz^k =a_1+b_1y \in B$ by induction hypothesis. The base case shows that $yz \in B,$ so $yz = a_2 +b_2y $ and thus,
    \[yz^{k+1}= a_1z + b_1(a_2+b_2y) = a_1z + b_1a_2+b_1b_2y \in B. \]
    This proves that $B$ is a $\Q$-algebra.

Lastly, we show that $\Q\langle x,y,z \rangle = B$. Clearly $y,z \in B$, then $x \in B$ by Lemma \ref{Lemma: subfield can only contain one generator} since $B$ is a division $\Q$-subalgebra of $A$. This proves that $\Q\langle x,y,z\rangle \subseteq B$. It is also clear that $\Q(z) \subseteq \Q\langle x,y,z\rangle$ and $y \in \Q\langle x,y,z\rangle$, so $B\subseteq \Q\langle x,y,z\rangle.$
 \end{proof}

\begin{corollary}
\label{Lemma: binary tetrahedral}
If $\Lambda^1$ is isomorphic to the binary tetrahedral group, then $\dim_\Q(A) \geq 4$ and $\dim_\Q (\Q\langle \Lambda^1\rangle) =4$.
\end{corollary}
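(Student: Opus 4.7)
The plan is to apply Proposition \ref{Lemma: subspace is infact an algebra}, which already identifies $\Q\langle \Lambda^1\rangle$ with the $\Q$-algebra $B = \Q(z)\oplus \Q(z)y$. Since $y\notin \Q(z)$ by Lemma \ref{Lemma: subfield can only contain one generator}, this sum is direct, so $\dim_\Q \Q\langle \Lambda^1\rangle = 2[\Q(z):\Q]$. Using the fact recalled just before Lemma \ref{Lemma: cyclic groups cannot generate the whole algebra} that in a division $\Q$-algebra an element of order $m$ generates a subfield of $\Q$-degree $\varphi(m)$, the problem reduces to pinning down the order of $z$ in the binary tetrahedral group.

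To do this I would combine the defining relation $z^3 = xyz$ with the relation $(xyz)^2 = 1$ recorded in Lemma \ref{Lemma: classification of reduced norm 1 group}. This gives $z^6 = (xyz)^2 = 1$, so the order of $z$ divides $6$. Writing $c := xyz$, we have $c^2 = 1$, and since $A$ is a division algebra the only square roots of $1$ are $\pm 1$, so $c = \pm 1$. If $c = 1$, the defining relations become $x^2 = y^3 = z^3 = xyz = 1$, which collapses $\Lambda^1$ to $\langle x,y \mid x^2 = y^3 = (xy)^3 = 1\rangle \cong A_4$ of order $12$, contradicting the order $24$ of the binary tetrahedral group. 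Hence $c = -1$ and $z^3 = -1$. This rules out $z$ having order $1$ or $3$. The only remaining possibility other than $6$ is $z$ of order $2$, i.e., $z = -1$; but then the generators collapse to $\langle x, y\rangle$ with relations $x^2 = y^3 = -1$ and $xy(-1) = -1$, forcing $y = x^{-1}$ and giving a cyclic group of order at most $4$, again contradicting order $24$. Therefore $z$ has order $6$.

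Consequently, $[\Q(z):\Q] = \varphi(6) = 2$, and $\dim_\Q \Q\langle \Lambda^1\rangle = 2 \cdot 2 = 4$. The inequality $\dim_\Q A \geq 4$ is then immediate, either because $\Q\langle \Lambda^1\rangle$ is a $\Q$-subalgebra of $A$ of dimension $4$, or directly from $\dim_\Q A = 4[K:\Q] \geq 4$. The main obstacle is the middle step: one has to use the specific order $24$ of the binary tetrahedral group to rule out the degenerate cases in which $z$ would fail to be a primitive sixth root of unity, and thus guarantee that $\Q(z)$ is really the quadratic field $\Q(\zeta_6)$ rather than $\Q$ itself.
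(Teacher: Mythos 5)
Your proof is correct and follows essentially the same route as the paper: both identify $\Q\langle\Lambda^1\rangle$ with $B=\Q(z)\oplus\Q(z)y$ via Proposition \ref{Lemma: subspace is infact an algebra} and compute $\dim_\Q B = 2[\Q(z):\Q]=2\varphi(6)=4$, with the inequality $\dim_\Q A\geq 4$ coming from $\dim_\Q A = 4[K:\Q]$. The only difference is that you carefully derive from the presentation that $z$ has order $6$ (ruling out the degenerate cases via the division-algebra fact that $(xyz)^2=1$ forces $xyz=-1$), whereas the paper takes this standard property of the binary tetrahedral group for granted.
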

\begin{proof}
By Theorem \ref{Thm: Classification of groups}, $K$ contains necessarily $\Q$ , so $\dim_\Q(A)= 4[K:\Q] \geq 4.$
  By Proposition \ref{Lemma: subspace is infact an algebra}, $\Q\langle \Lambda^1\rangle = B$. Then $dim_\Q(B) = 2[\Q(z):\Q]= 2\varphi(6) = 4.$
\end{proof}
\begin{corollary}
\label{Lemma: binary octa and icosahedral}
   If $\Lambda^1$ is isomorphic to the binary octahedral group or the  icosahedral group, then $dim_\Q(A) \geq 8$ and $dim_\Q (\Q\langle\Lambda^1 \rangle) =8.$
\end{corollary}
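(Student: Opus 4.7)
The plan is to mirror the proof of Corollary~\ref{Lemma: binary tetrahedral} (the binary tetrahedral case), substituting the appropriate group data for the binary octahedral and binary icosahedral groups. The two assertions split cleanly: the lower bound on $\dim_\Q(A)$ follows from the classification theorem, and the exact computation of $\dim_\Q(\Q\langle \Lambda^1\rangle)$ follows from Proposition~\ref{Lemma: subspace is infact an algebra} once I pin down the order of the generator $z$.

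For the first claim, I would apply Theorem~\ref{Thm: Classification of groups}: if $\Lambda^1$ is isomorphic to the binary octahedral group then $K \supseteq \Q(\sqrt{2})$, and if $\Lambda^1$ is isomorphic to the binary icosahedral group then $K \supseteq \Q(\sqrt{5})$. In either case $[K:\Q]\geq 2$, so $\dim_\Q(A) = 4[K:\Q] \geq 8$.

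For the second claim, Proposition~\ref{Lemma: subspace is infact an algebra} gives $\Q\langle \Lambda^1\rangle = B = \Q(z)\oplus{}\Q(z)y$, hence $\dim_\Q(B) = 2[\Q(z):\Q]$. By fact~(3) in the preamble to Lemma~\ref{Lemma: cyclic groups cannot generate the whole algebra}, $[\Q(z):\Q] = \varphi(m)$ where $m$ is the order of $z$ in $\Lambda^1$. Reading the presentations in Lemma~\ref{Lemma: classification of reduced norm 1 group} together with $(xyz)^2 = 1$, one gets $x^4 = 1$ and then $z^{2k} = (z^k)^2 = (x^2)^2 = 1$ with $k=4$ for the octahedral case and $k=5$ for the icosahedral case. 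Thus $z$ has order dividing $8$ and $10$ respectively; I then need to rule out smaller orders. A convenient way is to note that $z$ together with a conjugate generates the whole exceptional group, so if the order of $z$ were a proper divisor of $2k$ the group would collapse below its cardinality $48$ or $120$; equivalently, I can just invoke the well-known fact that in these standard presentations $z$ generates a maximal cyclic subgroup of order $8$ (resp.\ $10$). Either way $\varphi(8) = \varphi(10) = 4$, so $\dim_\Q(B) = 2\cdot 4 = 8$.

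The main (minor) obstacle is the verification that $z$ has exact order $2k$ rather than a proper divisor; everything else is a routine transcription of the binary tetrahedral argument. This step can be absorbed into a one-line appeal to the standard description of the binary octahedral and icosahedral groups, keeping the proof essentially parallel to the preceding corollary.
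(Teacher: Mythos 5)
Your proposal is correct and takes essentially the same route as the paper: the lower bound on $\dim_\Q(A)$ comes from Theorem~\ref{Thm: Classification of groups} forcing $K$ to contain a quadratic field, and the equality $\dim_\Q(\Q\langle\Lambda^1\rangle)=8$ comes from Proposition~\ref{Lemma: subspace is infact an algebra} together with $[\Q(z):\Q]=\varphi(8)=\varphi(10)=4$. The only difference is that you explicitly justify that $z$ has exact order $8$ (resp.\ $10$), a fact the paper simply asserts.
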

\begin{proof}
By Theorem \ref{Thm: Classification of groups}, $K$ contains a quadratic extension and thus, $dim_\Q(A) = 4[K:\Q] \geq 8.$
  By Proposition \ref{Lemma: subspace is infact an algebra},  $\Q\langle \Lambda^1\rangle = B$. Let $z$ be the generator of $\Lambda^1$ which is isomorphic to either the binary octahedral group or the icosahedral group of order $8$ and $10$ respectively. Then $dim_\Q(B)= 2[\Q(z):\Q]=  8$ in both cases since $\varphi(8) = \varphi(10) = 4$. 
\end{proof}

\begin{theorem}\label{Thm: sufficient condition for WR for exceptional group case}
Suppose $\Lambda^1$ is isomorphic to an exceptional group.
\begin{enumerate}
    \item  If $[K:\Q]=1$, $\Lambda^1$ contains a $\Q$-basis for $A$ if and only if $\Lambda^1$ is isomorphic to the binary tetrahedral group.
    \item If $[K:\Q]=2$,  $\Lambda^1$ contains a $\Q$-basis for $A$ if and only if $\Lambda^1$ is isomorphic to the binary octahedral group or the binary icosahedral group.
    \item If $[K:\Q]> 2$, $\Lambda^1$ does not contain a $\Q$-basis for $A$.
    \end{enumerate}
\end{theorem}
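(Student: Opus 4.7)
My plan is to reduce the entire statement to a dimension count over $\Q$. By Proposition \ref{Lemma: well rounded if and only if basis} (combined with the remark preceding Lemma \ref{Lemma: cyclic groups cannot generate the whole algebra}), $\Lambda^1$ contains a $\Q$-basis for $A$ if and only if $\Q\langle \Lambda^1\rangle = A$, and since $A$ has $\Q$-dimension $4[K:\Q]$, this is in turn equivalent to $\dim_\Q \Q\langle\Lambda^1\rangle = 4[K:\Q]$. Corollary \ref{Lemma: binary tetrahedral} gives $\dim_\Q\Q\langle\Lambda^1\rangle = 4$ when $\Lambda^1$ is binary tetrahedral, and Corollary \ref{Lemma: binary octa and icosahedral} gives $\dim_\Q\Q\langle\Lambda^1\rangle = 8$ when $\Lambda^1$ is binary octahedral or binary icosahedral. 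The rest of the argument is a direct case analysis on $[K:\Q]$.

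For case (1), $[K:\Q]=1$ means $K=\Q$ and $\dim_\Q A = 4$. Theorem \ref{Thm: Classification of groups} forces $\Lambda^1$ to be binary tetrahedral in this setting, since both the binary octahedral and binary icosahedral cases would require $K \supseteq \Q(\sqrt{2})$ or $K \supseteq \Q(\sqrt{5})$. With $\Lambda^1$ binary tetrahedral, Corollary \ref{Lemma: binary tetrahedral} yields $\dim_\Q\Q\langle\Lambda^1\rangle = 4 = \dim_\Q A$, so $\Q\langle\Lambda^1\rangle = A$ and we obtain the desired $\Q$-basis; conversely the only exceptional group compatible with $K=\Q$ is already the binary tetrahedral one. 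For case (2), $[K:\Q]=2$ gives $\dim_\Q A = 8$. If $\Lambda^1$ is binary tetrahedral, then $\dim_\Q\Q\langle\Lambda^1\rangle = 4 \neq 8$, so no $\Q$-basis arises; if $\Lambda^1$ is binary octahedral or binary icosahedral, then $\dim_\Q\Q\langle\Lambda^1\rangle = 8 = \dim_\Q A$ and we again conclude $\Q\langle\Lambda^1\rangle = A$. For case (3), $[K:\Q] \geq 3$ gives $\dim_\Q A = 4[K:\Q] \geq 12 > 8 \geq \dim_\Q\Q\langle\Lambda^1\rangle$ for any exceptional group, so $\Q\langle\Lambda^1\rangle \subsetneq A$ and $\Lambda^1$ cannot contain a $\Q$-basis.

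There is no substantial technical obstacle here, since the hard work is already done in Corollaries \ref{Lemma: binary tetrahedral} and \ref{Lemma: binary octa and icosahedral} and in the classification Theorem \ref{Thm: Classification of groups}. The only care needed is to separate the two directions of the biconditionals in cases (1) and (2): the ``if'' direction follows from the matching of dimensions, while the ``only if'' direction follows either by excluding incompatible group types via Theorem \ref{Thm: Classification of groups} (in case (1)) or by observing that the binary tetrahedral subalgebra is too small (in case (2)).
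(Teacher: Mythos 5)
Your proposal is correct and follows essentially the same route as the paper: reduce to comparing $\dim_\Q \Q\langle\Lambda^1\rangle$ (which is $4$ or $8$ by Corollaries \ref{Lemma: binary tetrahedral} and \ref{Lemma: binary octa and icosahedral}) against $\dim_\Q A = 4[K:\Q]$, using Theorem \ref{Thm: Classification of groups} to rule out the octahedral and icosahedral cases over $\Q$. Your explicit separation of the two directions of the biconditionals is slightly more careful than the paper's write-up, but the substance is identical.
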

\begin{proof}
The dimension of $A$ over $\Q$ is $4n$ where $n = [K:\Q].$ We note that from Lemma \ref{Lemma: binary tetrahedral} and Lemma \ref{Lemma: binary octa and icosahedral}, $dim_\Q (\Q \langle \Lambda^1\rangle)= 4$ if $\Lambda^1$ is isomorphic to the binary tetrahedral group and $dim_\Q(\langle \Lambda^1\rangle) =8$ for the other two exceptional groups. 

For the case $K=\Q$, $\Lambda^1$ will only be isomorphic to the  binary tetrahedral group by Theorem \ref{Thm: Classification of groups}. So $dim_\Q (A) = 4 = dim_\Q(\langle \Lambda^1\rangle)$ and thus, $\Lambda^1$ contains a $\Q$-basis from $\Lambda^1$.

For the case where $K$ is quadratic, $dim_\Q (A) = 8$. So only the binary tetrahedral group case does not contain a $\Q$-basis for $A.$

For a general number field $K$ of degree greater than 2, $dim_\Q (A) \geq 12$. So $\Lambda^1$ being isomorphic to any of the exceptional group will fail to have a $\Q$-basis for $A.$
\end{proof}

We are left with  the case where $\Lambda^1$ is isomorphic to the binary dihedral group of order $4m$, whose presentation is
\[
\langle x, y |y^{2m}=1, x^2=y^m, xyx^{-1}=y^{-1}\rangle.
\]
By Theorem \ref{Thm: Classification of groups}, $K$ contains $\Q(y+y^{-1})$ which is of degree $\frac{\varphi(2m)}{2}$ over $\Q$. Since $\varphi(2m)$ divides $2[K:\Q]$, for a given $K$, there are only finitely many possible values of $m$. 


\begin{lemma}\label{Lemma: Q algebra generated is algebra}
    Suppose $\Lambda^1$ is isomorphic to the binary dihedral group of order $4m$ and consider the $\Q$-vector subspace $B = \Q(y)\oplus{}\Q(y)x$. Then $\Q\langle\Lambda^1 \rangle = B.$
\end{lemma}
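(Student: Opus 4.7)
The plan is to mirror the structure of Proposition \ref{Lemma: subspace is infact an algebra}: first check that $B$ is a genuine $\Q$-vector space of dimension $2[\Q(y):\Q]$ (i.e.\ the sum is direct), then verify it is closed under multiplication, and finally prove the two inclusions $\Q\langle \Lambda^1\rangle\subseteq B$ and $B\subseteq \Q\langle \Lambda^1\rangle$.

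For directness of the sum, I would show $x\notin\Q(y)$. Since $A$ is a totally definite quaternion algebra and hence a division $\Q$-algebra, the $\Q$-subalgebra $\Q(y)$ is a field, in particular commutative. If $x$ lay in $\Q(y)$, then $xy=yx$, so $xyx^{-1}=y$, contradicting the defining relation $xyx^{-1}=y^{-1}$ (which, combined with $xy=yx$, would force $y^2=1$, impossible since $y$ has order $2m\geq 4$). Therefore $\Q(y)\cap \Q(y)x=\{0\}$ and $B=\Q(y)\oplus \Q(y)x$ is a $\Q$-subspace of $A$.

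For closure under multiplication, the key observation is the ``straightening'' identity $xy^k=y^{-k}x$ for every $k\in\Z$, which follows by induction from $xy=y^{-1}x$. Extending linearly, $x\cdot f(y)=f(y^{-1})x\in \Q(y)x$ for every $f(y)\in \Q(y)$. Combined with $x^2=y^m\in \Q(y)$, this lets me expand a general product
\[
(a_1+b_1 x)(a_2+b_2 x)= a_1 a_2 + a_1 b_2 x + b_1(x a_2) + b_1(x b_2)x,
\]
where $a_i,b_i\in\Q(y)$; the first two summands lie in $\Q(y)\oplus \Q(y)x$ immediately, while $xa_2$ and $xb_2$ lie in $\Q(y)x$ by the identity above, so $b_1(xa_2)\in\Q(y)x$ and $b_1(xb_2)x\in \Q(y)x\cdot x=\Q(y)x^2=\Q(y)y^m\subseteq \Q(y)$. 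Hence the product lies in $B$.

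For the two inclusions, note that every element of the binary dihedral group has the form $y^k$ or $y^k x$ with $0\leq k<2m$, so $\Lambda^1\subseteq B$; since $B$ is a $\Q$-algebra, this gives $\Q\langle \Lambda^1\rangle\subseteq B$. Conversely, $y\in \Q\langle \Lambda^1\rangle$ gives $\Q(y)\subseteq \Q\langle \Lambda^1\rangle$, and then $x\in \Q\langle \Lambda^1\rangle$ gives $\Q(y)x\subseteq \Q\langle \Lambda^1\rangle$, so $B\subseteq \Q\langle \Lambda^1\rangle$. The only step that requires any care is the straightening identity, and even that is routine once $xy=y^{-1}x$ is in hand; there is no serious obstacle.
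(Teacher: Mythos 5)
Your proof is correct and follows essentially the same route as the paper's: show $x\notin\Q(y)$ so the sum is direct, then use the normal form $y^kx^\ell$ for elements of the binary dihedral group to identify $\Q\langle\Lambda^1\rangle$ with $\Q(y)\oplus\Q(y)x$. The only difference is that you spell out the closure of $B$ under multiplication via the straightening identity $xy^k=y^{-k}x$ and $x^2=y^m$, which the paper leaves implicit in its appeal to the normal form; this is a welcome but not substantively different elaboration.
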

\begin{proof}
    If $x \in \Q(y),$ then $\Lambda^1 \subseteq \Q(y)$ which is a contradiction since $\Lambda^1$ is non-abelian. So $B =\Q(y)\oplus{}\Q(y)x$ is a $\Q$-vector space. 

    It is well-known that every element in $\Lambda^1$ can be expressed as $y^kx^\ell$ where $k \in \set{0,\ldots, 2m-1}$ and $\ell \in \set{0,1}$. So \[\Q\langle \Lambda^1 \rangle = \Q\langle \set{y^kx^\ell| k \in \set{0,\ldots,2m-1}, \ell \in \set{0,1}}\rangle = \Q(y)\oplus{}\Q(y)x.\]
\end{proof}

Recall that a maximal subfield is a subfield $L/K$ that is not contained in any other proper subfield. Necessarily, $[L:K] \leq dim_K(A).$ In particular, $[L:K]=2$ for a maximal subfield in a quaternion algebra.

\begin{theorem}\label{Thm: characterization for WR binary dihedral case}
    Let $A$ be a totally definite quaternion algebra over a totally real number field $K$ and suppose that $\Lambda^1$ is isomorphic to the binary dihedral group of order $4m$ for $m>1$. Then $\Lambda^1$  contains a $\Q$-basis for $A$ if and only if  $K$ is a maximal real subfield of $\Q(\zeta_{2m})$ i.e., $\varphi(2m) = 2n$, $n=[K:\Q]$.
\end{theorem}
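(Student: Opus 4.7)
The plan is to apply Lemma~\ref{Lemma: Q algebra generated is algebra} and reduce the question to a $\Q$-dimension count. By that lemma, $\Q\langle \Lambda^1 \rangle = B = \Q(y) \oplus \Q(y)x$, and $\Lambda^1$ contains a $\Q$-basis for $A$ if and only if $\Q\langle \Lambda^1 \rangle = A$. So the task is to identify exactly when $B = A$, and then translate the resulting numerical condition into the maximal-real-subfield statement.

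First I would compute $\dim_\Q B$. The presentation of the binary dihedral group of order $4m$ forces $y$ to have order exactly $2m$, and since the isomorphism $\Lambda^1 \to \text{bin.\ dih.}$ is an isomorphism of groups, $y$ also has order $2m$ as an element of $A^\times$. Because $A$ is a totally definite quaternion algebra, it is a division $\Q$-algebra, so the third preamble fact before Lemma~\ref{Lemma: cyclic groups cannot generate the whole algebra} gives $[\Q(y):\Q] = \varphi(2m)$. Hence $\dim_\Q B = 2\varphi(2m)$, while $\dim_\Q A = 4n$ where $n = [K:\Q]$.

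For the forward direction, if $\Lambda^1$ contains a $\Q$-basis for $A$, then $B = A$, so $2\varphi(2m) = 4n$, i.e.\ $\varphi(2m) = 2n$. Conversely, if $\varphi(2m) = 2n$, then $\dim_\Q B = 4n = \dim_\Q A$, and the inclusion $B \subseteq A$ is forced to be an equality, giving $\Q\langle \Lambda^1\rangle = A$.

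It remains to match the numerical condition $\varphi(2m) = 2n$ with $K$ being a maximal real subfield of $\Q(\zeta_{2m})$. By Theorem~\ref{Thm: Classification of groups}(2), $K \supseteq \Q(\zeta_{2m} + \zeta_{2m}^{-1})$, which is the unique maximal totally real subfield of $\Q(\zeta_{2m})$ and has degree $\varphi(2m)/2$ over $\Q$. Since $K$ is itself totally real, $n = \varphi(2m)/2$ holds if and only if $K = \Q(\zeta_{2m} + \zeta_{2m}^{-1})$. The one point that requires a moment of care, but is not really an obstacle, is confirming that the group-theoretic order of $y$ coincides with its order in $A^\times$; this follows from the injectivity of the embedding $\Lambda^1 \hookrightarrow A^\times$, which in turn is automatic from $A$ being a division algebra.
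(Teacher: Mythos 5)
Your proof is correct and follows essentially the same route as the paper: both reduce the question to the dimension of $B=\Q(y)\oplus\Q(y)x=\Q\langle\Lambda^1\rangle$ via Lemma~\ref{Lemma: Q algebra generated is algebra} and the fact that $[\Q(y):\Q]=\varphi(2m)$. The only (harmless) difference is that for the converse the paper counts $K$-dimensions, viewing $\Q(y)$ as a maximal $K$-subfield so that $\dim_K(L\oplus Lx)=4$ forces $B=A$, whereas you reuse the same $\Q$-dimension count $\dim_\Q B=2\varphi(2m)=4n$ in both directions, which is if anything slightly more uniform.
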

\begin{proof}
    Suppose $\Lambda^1$ contains a $\Q$-basis for $A$ and $K$ properly contains $\Q(y+y^{-1})$ so that $\varphi(2m) < 2n$. Let $B = \Q(y)\oplus{}\Q(y)x = \Q\langle \Lambda^1\rangle$ by Lemma \ref{Lemma: Q algebra generated is algebra}. However, $dim_\Q(B)= 2[\Q(y):\Q]= 2 \varphi(2m) < 4n$ which implies that $B \neq A$ and thus $\Lambda^1$ does not contain a $\Q$-basis for $A$ which is a contradiction.
    
   Conversely, suppose $K$ is the maximal real subfield of $\Q(y)$ which can be realized by $K = \Q(y + y^{-1})$. So $L = \Q(y) \supset \Q(y+y^{-1})=K$ is a maximal $K$-subfield of the $K$-algebra $A$. Since $x \notin L$ then the $K$-algebra $ L\langle x\rangle = L\oplus{}Lx = B$ 
   has dimension $dim_K(B) =dim_K (L) dim_L (B) = 4$ which implies that $A = B$. 
Thus, $\Lambda^1$ contains a $\Q$-basis from $\Lambda^1.$ 
\end{proof}

\begin{theorem}\label{Thm: Characterization of WR lattice}
Let $A$ be a totally definite quaternion algebra over a totally real number field $K$ of degree $n$. 
\begin{enumerate}
\item If $[K:\Q]=1$, $\Lambda^1$ contains a $\Q$-basis for $A$ if and only if $\Lambda^1$ is isomorphic to  the binary tetrahedral group or the binary dihedral group of order $4m$ where $\varphi(2m) = 2$.
\item If $[K:\Q]=2$, $\Lambda^1$ contains a $\Q$-basis for $A$ if and only if $\Lambda^1$ is isomorphic to the binary octahedral group, the icosahedral group or the binary dihedral group and  $K= \Q(\zeta_{2m}+\zeta^{-1}_{2m})$.
\item If $(K:\Q]>2$, $\Lambda^1$ contains a $\Q$-basis for $A$ if and only if $\Lambda^1$ is isomorphic to the binary dihedral group and $K= \Q(\zeta_{2m}+\zeta^{-1}_{2m})$.
    \end{enumerate}
\end{theorem}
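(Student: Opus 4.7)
The plan is to recognize that this theorem is essentially a bookkeeping synthesis of the machinery already built up. By Proposition \ref{Lemma: well rounded if and only if basis}, the ideal lattice $(\Lambda, b_\alpha)$ is well-rounded if and only if $\Lambda^1$ contains a $\Q$-basis for $A$, so I only need to enumerate when this condition holds. Lemma \ref{Lemma: classification of reduced norm 1 group} partitions the possibilities for $\Lambda^1$ into four mutually exclusive shapes: cyclic, binary dihedral, or one of the three exceptional groups. The cyclic possibility is killed once and for all by Lemma \ref{Lemma: cyclic groups cannot generate the whole algebra}, leaving only the dihedral and exceptional types to discuss.

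Next I would invoke the two characterization theorems already at my disposal. Theorem \ref{Thm: sufficient condition for WR for exceptional group case} settles the exceptional cases: the binary tetrahedral group works exactly when $[K:\Q]=1$, while the binary octahedral and binary icosahedral groups work exactly when $[K:\Q]=2$, and no exceptional type produces a $\Q$-basis once $[K:\Q]>2$. Theorem \ref{Thm: characterization for WR binary dihedral case} handles the dihedral case: $\Lambda^1 \cong $ binary dihedral of order $4m$ contributes a $\Q$-basis precisely when $K$ is a maximal real subfield of $\Q(\zeta_{2m})$, equivalently $K=\Q(\zeta_{2m}+\zeta_{2m}^{-1})$, equivalently $\varphi(2m) = 2n$.

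The remainder is case assembly on $n = [K:\Q]$. For $n=1$, Theorem \ref{Thm: Classification of groups} forces any exceptional $\Lambda^1$ to be binary tetrahedral (since $\Q$ does not contain $\Q(\sqrt{2})$ or $\Q(\sqrt{5})$), and the dihedral condition specializes to $\varphi(2m)=2$, which is the stated restriction. For $n=2$, the exceptional type must be binary octahedral or icosahedral (Theorem \ref{Thm: Classification of groups} rules out the tetrahedral since then $\dim_\Q \Q\langle\Lambda^1\rangle = 4 < 8 = \dim_\Q A$, as explicitly noted in Theorem \ref{Thm: sufficient condition for WR for exceptional group case}), and the dihedral condition becomes $K = \Q(\zeta_{2m}+\zeta_{2m}^{-1})$ with $\varphi(2m)=4$. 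For $n>2$, the exceptional branch is empty by Theorem \ref{Thm: sufficient condition for WR for exceptional group case}, so only the dihedral case with $K = \Q(\zeta_{2m}+\zeta_{2m}^{-1})$ remains.

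I do not anticipate any real obstacle here: the statement is the explicit tabulation of the three preceding results across the three ranges of $n$, and the only care needed is to confirm, in the $n=2$ exceptional sub-case, that the hypothesis of Theorem \ref{Thm: Classification of groups} (namely $\Q(\sqrt{2}) \subseteq K$ or $\Q(\sqrt{5}) \subseteq K$) is an implicit consequence of $\Lambda^1$ being the binary octahedral or icosahedral group, rather than an additional constraint the theorem statement must carry.
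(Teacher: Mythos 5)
Your proposal is correct and follows the same route as the paper: the paper's proof is literally the one-line assembly of Lemma \ref{Lemma: cyclic groups cannot generate the whole algebra}, Theorem \ref{Thm: Classification of groups}, Theorem \ref{Thm: sufficient condition for WR for exceptional group case} and Theorem \ref{Thm: characterization for WR binary dihedral case}, which is exactly the case analysis you spell out. Your closing remark about the $n=2$ exceptional sub-case is also resolved the way you suspect: since $\Lambda^1 \subseteq A^\times$, Theorem \ref{Thm: Classification of groups} forces $\Q(\sqrt{2})$ or $\Q(\sqrt{5})$ to lie in $K$, so it is an implicit consequence rather than an extra hypothesis.
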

\begin{proof}
This follows from  Theorem \ref{Thm: Classification of groups}, Theorem \ref{Thm: sufficient condition for WR for exceptional group case}, Theorem \ref{Thm: characterization for WR binary dihedral case} and Lemma \ref{Lemma: cyclic groups cannot generate the whole algebra}.

\end{proof}

\begin{example} 
We revisit Example \ref{Example: Hurwitz order over the rationals}, where we set $A= \left(\frac{-1,-1}{\Q}\right)$, let $\Lambda$ be the Hurwitz order and chose $\alpha \in \Q^\times_{>0}$. We directly showed that the corresponding lattice is well-rounded. 

Using Theorem \ref{Thm: Characterization of WR lattice}, it suffices to check the group isomorphism of $\Lambda^1$. By \cite[Theorem 11.5.14]{voight2021quaternion},  $\Lambda^1$ is isomorphic to the binary tetrahedral group of order 24 and thus $(\Lambda,b_\alpha)$ is well-rounded.
\end{example}

\begin{corollary}\label{corr: wellrounded lattice from order is sufficient to ideal lattice to be well rounded}
    Let $A$ be a totally definite quaternion algebra over a totally real number field $K$, $I$ be a right ideal of $\I_K$-order $\Lambda$ and $\alpha \in \Q^\times_{>0}$. If $\Lambda^1$ contains a $\Q$-basis for $A$, then $(I,b_\alpha)$ is a well-rounded lattice.
\end{corollary}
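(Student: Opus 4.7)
The plan is to leverage the hypothesis that $\Lambda^1$ contains a $\Q$-basis of $A$ by right-translating a single minimum vector of $(I,b_\alpha)$ by all elements of such a basis, producing $4n$ minimum vectors that are $\R$-linearly independent. The key observation is that right multiplication by an element of reduced norm one preserves the squared norm $\|\sigma(\cdot)\|^2$, because $n_{A/K}$ takes values in the center $K$ and therefore commutes past everything.

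First, I fix a $\Q$-basis $\{u_1,\ldots,u_{4n}\}\subseteq \Lambda^1$ of $A$, guaranteed by hypothesis, and choose any nonzero shortest vector $x\in I$ with respect to $b_\alpha$. Because $I$ is a right $\Lambda$-ideal and each $u_\ell\in\Lambda^1\subseteq\Lambda$, we have $xu_\ell\in I$ for every $\ell$. Using $\overline{xu_\ell}=\overline{u_\ell}\,\overline{x}$, the centrality of $n_{A/K}(u_\ell)\in K$, and $n_{A/K}(u_\ell)=u_\ell\overline{u_\ell}=1$, I would compute
\[
\alpha\, xu_\ell\,\overline{xu_\ell}=\alpha\, x\, n_{A/K}(u_\ell)\,\overline{x}=\alpha\, x\overline{x},
\]
so $\|\sigma(xu_\ell)\|^2 = Tr_{K/\Q}(tr_{A/K}(\alpha x\overline{x})) = \|\sigma(x)\|^2$. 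Thus each $xu_\ell$ is itself a minimum vector of $(I,b_\alpha)$.

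It remains to verify that $\{\sigma(xu_1),\ldots,\sigma(xu_{4n})\}$ is $\R$-linearly independent. Since $A$ is a division algebra and $x\neq 0$, right multiplication by $x$ is a $\Q$-linear injection on $A$, so it carries the $\Q$-basis $\{u_\ell\}$ to another $\Q$-basis $\{xu_\ell\}$ of $A$. Tensoring with $\R$ shows that $\{xu_\ell\otimes 1\}$ is an $\R$-basis of $A_\R$; composing with the $\R$-linear isomorphism $\psi\circ\varphi:A_\R\to\R^{4n}$ yields that $\{\sigma(xu_\ell)\}$ is an $\R$-basis of $\R^{4n}$ consisting of minimum vectors, so $(I,b_\alpha)$ is well-rounded. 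No step is especially obstructive: the substantive content is the norm-invariance computation, which crucially uses that the reduced norm lies in the center.
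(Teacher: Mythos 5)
Your proposal is correct and follows essentially the same route as the paper: pick a shortest nonzero $x\in I$, right-translate it by a $\Q$-basis $\{u_\ell\}\subseteq\Lambda^1$, use $\overline{xu_\ell}=\overline{u_\ell}\,\overline{x}$ together with the centrality of $n_{A/K}(u_\ell)=1$ to see that each $xu_\ell$ is again a minimum vector, and conclude linear independence from $A$ being a division algebra. No discrepancies worth noting.
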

\begin{proof}
    Let $0 \neq \beta \in I$ such that 
    \[\|\sigma(\beta)\|^2 = Tr_{K/\Q}(tr_{A/K}(\alpha \beta\overline{\beta})) = \|(I,b_\alpha)\|.\] Suppose $\set{y_1,\ldots,y_{4n}}\subseteq \Lambda^1$ is a $\Q$-basis for $A$. Consider the  set $\set{\beta y_1,\ldots,\beta y_{4n}}$ which remains linearly independent over $\Q$ as $A$ is a division algebra, thus a $\Q$-basis for $A$. We have $\|\sigma(\beta y_i)\|^2 = \|(I,b_\alpha)\|$.  Indeed
    \begin{equation*}
        \begin{split}
            \|\sigma(\beta y_i)\|^2 &= Tr_{K/\Q}(tr_{A/K}(\alpha (\beta y_i)\overline{(\beta y_i)}))\\
            &=  Tr_{K/\Q}(tr_{A/K}(\alpha \beta y_i \overline{y_i}\overline{\beta}))\\
            &= Tr_{K/\Q}(tr_{A/K}(\alpha \beta (n_{A/K}(y_i)\overline{\beta}))\\
            &= Tr_{K/\Q}(tr_{A/K}(\alpha \beta\overline{\beta}))\\
            &= \|(I,b_\alpha)\|.
        \end{split}
    \end{equation*} This proves that $\set{\sigma(\beta y_i)| i=1,\ldots,4n}$ is an $\R$-basis for $\R^{4n}$ coming from minimum vectors.
\end{proof}

To conclude this subsection on characterizing well-rounded lattices, we summarize the different constructions of well-rounded lattices found in Table \ref{tab:cases}.

\begin{table}[h!]
\begin{tabular}{|l|l|l|c|}
\hline
$n$& $A$ & $\Lambda^1$ & $|\Lambda^1|$\\
\hline
&$\left(\frac{-1,-1}{\Q}\right)$  & binary tetrahedral group & 24 \\
1&$\left(\frac{-1,-1}{\Q}\right)$& binary dihedral group & 8 \\
&$\left(\frac{-1,-3}{\Q}\right)$& binary dihedral group & 12 \\\hline
&$\left(\frac{-1,-1}{\Q(\sqrt{2})}\right)$ & binary octahedral group & 48 \\
&$\left(\frac{-1,-1}{\Q(\sqrt{5})}\right)$ &  binary icosahedral group & 120 \\
2&$\left(\frac{-1,(\zeta_{2m}+\zeta_{2m}^{-1})^2-4}{\Q(\zeta_{2m}+\zeta_{2m}^{-1})}\right)$ & binary dihedral group &$4m=16,20,24$\\
\hline
$\geq3$&$\left(\frac{-1,(\zeta_{2m}+\zeta^{-1}_{2m})^2-4}{\Q(\zeta_{2m}+\zeta_{2m}^{-1})}\right)$ &binary dihedral group & $4m$ where $\varphi(2m)= 2n$\\
\hline
\end{tabular}   
\caption{\label{tab:cases}
A list of the possible totally definite quaternion $K$-algebras $A$ for which well-rounded lattices exist, described in terms of $A$ and $\Lambda^1$.}
\end{table}

\subsection{More properties}

The above proofs and computations allow us to explicit a basis of minimum vectors.

\begin{proposition}\label{Cor: Explicit basis}
    Suppose $(\Lambda,b_\alpha)$ is well-rounded and let $I$ be a right ideal of $\Lambda$.  Let $\beta \in I$ such that $\beta \in S(I,b_\alpha)$ (take $\beta = 1$ if $I = \Lambda$).
    \begin{itemize}
        \item If $\Lambda^1$ is isomorphic to one of the exceptional groups with generators $x,y,z$ whose relations are described in Lemma \ref{Lemma: classification of reduced norm 1 group}, then the set 
\[\set{\sigma(\beta z^ky^\ell)|k \in \set{0,\ldots,\varphi(c)-1}, \ell\in\set{0,1}}\]
    is an $\R$-basis of minimum vectors for $\R^{4n}$
     where $c$ is the order of the corresponding generator $z$ in $\Lambda^1$. 
     \item If $\Lambda^1$ is isomorphic to a binary dihedral group of order $4m$ with generators $x,y$ whose relations are described in Lemma $\ref{Lemma: classification of reduced norm 1 group}$, then the set 
     \[ \set{ \sigma(\beta y^k x^\ell) | k=0,\ldots, \varphi(2m)-1, \ell =0,1}\]is an $\R$-basis of minimum vectors for $\R^{4n}$.
    \end{itemize}
\end{proposition}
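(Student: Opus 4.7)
The plan is to combine two things that are already in hand: the argument in Corollary \ref{corr: wellrounded lattice from order is sufficient to ideal lattice to be well rounded}, which shows that if $\{y_1,\ldots,y_{4n}\} \subseteq \Lambda^1$ is a $\Q$-basis for $A$ then $\{\sigma(\beta y_i)\}$ is a basis of minimum vectors, and the explicit descriptions of $\Q\langle\Lambda^1\rangle$ furnished by Proposition \ref{Lemma: subspace is infact an algebra} and Lemma \ref{Lemma: Q algebra generated is algebra}. So the only real work is to exhibit the claimed basis in each case, verify it lies in $\Lambda^1$, and count dimensions.

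\textbf{Exceptional case.} First I would note that the hypothesis $(\Lambda,b_\alpha)$ well-rounded, together with Theorem \ref{Thm: sufficient condition for WR for exceptional group case} and its supporting lemmas, forces $\dim_\Q A = \dim_\Q \Q\langle\Lambda^1\rangle = 4n$, and by Proposition \ref{Lemma: subspace is infact an algebra} we have $A = \Q\langle\Lambda^1\rangle = \Q(z)\oplus \Q(z)y$. Next I would argue that $z \in \Lambda^1$ has exact multiplicative order $c$, so it satisfies $\Phi_c(z) = 0$; since $A$ is a division algebra the minimal polynomial of $z$ over $\Q$ is irreducible and equals $\Phi_c$, giving $[\Q(z):\Q] = \varphi(c)$ and the $\Q$-basis $\{1,z,\ldots,z^{\varphi(c)-1}\}$ of $\Q(z)$. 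Therefore $\{z^k y^\ell : 0\le k \le \varphi(c)-1,\ \ell\in\{0,1\}\}$ is a $\Q$-basis of $A$, and it has cardinality $2\varphi(c) = 4n$ as required. Each $z^k y^\ell$ is a product of elements of $\Lambda^1$, hence lies in $\Lambda^1$.

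\textbf{Binary dihedral case.} Here I would invoke Theorem \ref{Thm: characterization for WR binary dihedral case}, which in the well-rounded regime gives $K = \Q(\zeta_{2m}+\zeta_{2m}^{-1})$ and hence $\varphi(2m) = 2n$; by Lemma \ref{Lemma: Q algebra generated is algebra} one has $A = \Q(y)\oplus\Q(y)x$. The element $y$ has exact order $2m$, so by the same minimal-polynomial argument as above $[\Q(y):\Q] = \varphi(2m)$, and $\{1,y,\ldots,y^{\varphi(2m)-1}\}$ is a $\Q$-basis of $\Q(y)$. Consequently $\{y^k x^\ell : 0\le k\le \varphi(2m)-1,\ \ell\in\{0,1\}\}\subseteq \Lambda^1$ is a $\Q$-basis of $A$ of cardinality $2\varphi(2m) = 4n$.

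\textbf{Finishing up.} In both cases I would then apply the calculation from the proof of Corollary \ref{corr: wellrounded lattice from order is sufficient to ideal lattice to be well rounded}: for any $u \in \Lambda^1$,
\[
\|\sigma(\beta u)\|^2 = Tr_{K/\Q}(tr_{A/K}(\alpha \beta u\overline{u}\,\overline{\beta})) = Tr_{K/\Q}(tr_{A/K}(\alpha \beta\overline{\beta})) = \|\sigma(\beta)\|^2 = \|(I,b_\alpha)\|,
\]
since $u\overline{u} = n_{A/K}(u) = 1$. Moreover, as $A$ is a division algebra, left-multiplication by $\beta\neq 0$ is $\Q$-linear and injective, so the $\Q$-basis $\{u_i\}$ of $A$ is carried to a $\Q$-linearly independent family $\{\beta u_i\}$ in $A$, whose images under the $\R$-linear embedding $\sigma$ then form an $\R$-basis of $\R^{4n}$. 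This exhibits the two asserted sets as $\R$-bases of minimum vectors.

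I do not expect a serious obstacle; the only delicate point is the identification of the minimal polynomial of $z$ (resp.\ $y$) as the cyclotomic polynomial $\Phi_c$ (resp.\ $\Phi_{2m}$), which uses that minimal polynomials in a division algebra are irreducible together with the fact that the multiplicative order in $\Lambda^1$ coincides with the order as a root of unity inside the subfield generated.
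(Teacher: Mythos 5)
Your proposal is correct and follows essentially the route the paper intends: the paper gives no separate proof, relying on exactly the ingredients you assemble (the norm computation from Corollary \ref{corr: wellrounded lattice from order is sufficient to ideal lattice to be well rounded}, the decompositions $A=\Q(z)\oplus\Q(z)y$ and $A=\Q(y)\oplus\Q(y)x$ from Proposition \ref{Lemma: subspace is infact an algebra} and Lemma \ref{Lemma: Q algebra generated is algebra}, and the fact that an order-$c$ element generates a degree-$\varphi(c)$ subfield). Your identification of the minimal polynomial with the cyclotomic polynomial and the dimension counts match the paper's Corollaries \ref{Lemma: binary tetrahedral}--\ref{Lemma: binary octa and icosahedral} and Theorem \ref{Thm: characterization for WR binary dihedral case}, so no further changes are needed.
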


We will illustrate the above result in Example \ref{ex:zeta14}.

Let $\Lambda_1,\Lambda_2$ be $\I_K$-orders in $A$. The lattices $ \Lambda_1, \Lambda_2$ are said to be of the same type if there exists $u \in A^\times$ such that $\Lambda_1 = u^{-1}\Lambda_1 u$ \cite[Definition 17.4.1]{voight2021quaternion}, thus they are related by conjugation. Moreover, two orders are of the same type if they are isomorphic as $\I_K$-algebras \cite[Lemma 17.4.2]{voight2021quaternion}. The next proposition says that orders of the same type produce the same lattice.
\begin{proposition}\label{Prop: isomorphic orders are the same lattice}
    Let $\Lambda,\Lambda'$ be $\I_K$-orders of the same type and let $\alpha \in K^\times$ be  totally positive. Then $(\Lambda,b_\alpha) = (\Lambda',b_\alpha)$.
\end{proposition}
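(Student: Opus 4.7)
The plan is to exhibit an $\R$-linear isometry of $(A_\R, b_\alpha)$ that sends $\Lambda$ bijectively onto $\Lambda'$. By the definition of ``same type'' recalled just above the statement, there exists $u \in A^\times$ with $\Lambda' = u^{-1}\Lambda u$. The natural candidate is the conjugation map
\[
\phi : A_\R \to A_\R, \qquad \phi(x) = u^{-1} x u,
\]
which is an $\R$-algebra automorphism of $A_\R$ with inverse $y \mapsto u y u^{-1}$, and which by construction restricts to a $\Z$-module bijection $\Lambda \to \Lambda'$. The content of the proposition is then the claim that $\phi$ preserves $b_\alpha$.

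To verify this, I will use three ingredients: the canonical involution is anti-multiplicative, so $\overline{\phi(y)} = \overline{u}\,\overline{y}\,\overline{u}^{-1}$; the reduced norm satisfies $u\overline{u} = \overline{u}u = n_{A/K}(u) \in K$, hence is central in $A_\R$; and $tr_{A_\R/\R}$ is cyclic, as it decomposes under $\varphi$ into copies of $tr_{\mathbb{H}/\R}$ (each cyclic by the standard matrix-trace argument applied to the left regular representation of $\mathbb{H}$). Plugging into the definition and using the centrality of $\alpha$ and $n_{A/K}(u)$, I expect to get
\[
b_\alpha(\phi(x), \phi(y)) = tr_{A_\R/\R}\bigl(\alpha\, u^{-1}x\,(u\overline{u})\,\overline{y}\,\overline{u}^{-1}\bigr) = tr_{A_\R/\R}\bigl(\alpha\, n_{A/K}(u)\, u^{-1} x\,\overline{y}\,\overline{u}^{-1}\bigr).
\]
Cycling $\overline{u}^{-1}$ past everything to the front and combining it with $u^{-1}$ via $\overline{u}^{-1} u^{-1} = (u\overline{u})^{-1} = n_{A/K}(u)^{-1}$ will cancel the central scalar $n_{A/K}(u)$, leaving $tr_{A_\R/\R}(\alpha x \overline{y}) = b_\alpha(x,y)$.

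The only step requiring any care is the cyclic invariance of $tr_{A_\R/\R}$; all the remaining manipulations are algebraic consequences of $K$ lying in the center of $A$ and of the identity $u\overline{u} = \overline{u}u$. Once the isometry is established, $\phi$ realizes an isomorphism of lattices $(\Lambda, b_\alpha) \to (\Lambda', b_\alpha)$ (equivalently, under $\sigma$ it induces an orthogonal transformation of $\R^{4n}$ carrying $\sigma(\Lambda)$ onto $\sigma(\Lambda')$), and the two ideal lattices coincide as claimed.
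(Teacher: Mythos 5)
Your proof is correct and follows essentially the same route as the paper: both arguments show that conjugation by $u$ preserves $b_\alpha$ (using $u\overline{u}=\overline{u}u=n_{A/K}(u)\in K$ central and the anti-multiplicativity of the involution) and hence carries a $\Z$-basis of $\Lambda$ to one of $\Lambda'$ with the same Gram matrix. The only cosmetic difference is that the paper first applies the adjoint identity $b_\alpha(\boldsymbol{\lambda}\textbf{x},\textbf{y})=b_\alpha(\textbf{x},\overline{\boldsymbol{\lambda}}\textbf{y})$ from Proposition \ref{Prop: positive definite bilinear form}, which makes $u\overline{u}$ appear adjacently and so avoids any appeal to cyclicity of the reduced trace, whereas you expand directly and invoke $tr_{A_\R/\R}(ab)=tr_{A_\R/\R}(ba)$ --- which does hold for quaternion algebras, so your argument is equally valid.
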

\begin{proof}
    Let $u \in A^\times$ such that $\Lambda' = u^{-1}\Lambda u$ and $\set{x_1,\ldots,x_{4n}}$ be a $\Z$-basis for $\Lambda$, then $\set{u^{-1}x_1 u,\ldots u^{-1}x_{4n}u}$ is a $\Z$-basis for $\Lambda'.$ It suffices to show that the Gram matrix of $(\Lambda,b_\alpha)$ and that of  $(\Lambda',b_\alpha)$ are the same. We will make use of the fact that for any $\boldsymbol{\lambda},\textbf{x},\textbf{y} \in A_\R$, then $b_\alpha(\boldsymbol{\lambda} \textbf{x},\textbf{y}) = b_\alpha( \textbf{x},\overline{\boldsymbol{\lambda}}\textbf{y})$ by Proposition \ref{Prop: positive definite bilinear form}. For any $1\leq i,j \leq 4n$, then 
    \begin{equation*}
        \begin{split}
            b_\alpha(u^{-1}x_iu, u^{-1}x_ju) &=b_\alpha(x_iu, \overline{u^{-1}}u^{-1}x_ju)\\
           &=  Tr_{K/\Q}(tr_{A/K}(\alpha (x_iu)\overline{(\overline{u^{-1}}u^{-1}x_ju)}))\\
            &= Tr_{K/\Q}(tr_{A/K}(\alpha (x_iu)\overline{n_{A/K}(u^{-1})x_j u}))\\
            &= Tr_{K/\Q}(tr_{A/K}(x_i u \overline{u}\overline{x_j}\overline{n_{A/K}(u^{-1}}))\\
            &= Tr_{K/\Q}(n_{A/K}(u)n_{A/K}(u^{-1})tr_{A/K}(x_i\overline{x_j}))\\
            &=Tr_{K/\Q}(tr_{A/K}(x_i \overline{x_j}))\\
            &= b_\alpha(x_i,x_j).
        \end{split}
    \end{equation*}
 \end{proof}

Let $B_1 = \left(\frac{-1,-1}{\Q}\right), B_2 = \left(\frac{-1,-3}{\Q}\right)$. We define the following three orders
\begin{equation*}
    \begin{split}
        & \Lambda_1 := \Z \oplus{} \Z i \oplus{}\Z j \oplus{} \Z ij \subset B_1,\\
        &\Lambda_2 := \Z \oplus{} \Z i \oplus{} \Z j \oplus{}\Z  \frac{1+i+j+ij}{2} \subset B_1,\\
        & \Lambda_3 := \Z \oplus{} \Z i \oplus{} \Z \frac{1+j}{2} \oplus{}\Z \frac{i+ij}{2} \subset B_2.
    \end{split}
\end{equation*} Fixing $\alpha \in \Q^\times_{>0}$, then $(\Lambda_1,b_\alpha)$ is the $\Z^4$ lattice, $(\Lambda_2,b_\alpha)$ is the $D_4$ lattice defined in Example \ref{Example: Hurwitz order over the rationals}, and $(\Lambda_3,b_\alpha)$ is the $A_2\oplus{}A_2$ lattice ($A_2$ denotes the hexagonal lattice), all scaled by a factor of $\alpha.$

We conclude this subsection with a classification theorem.

\begin{theorem}\label{Prop: wellrounded ideal lattice for the case over Q}
    Let $A$ be a totally definite quaternion algebra over $\Q$ and $\alpha \in \Q^\times_{>0}$. There are only three  well-rounded ideal lattices coming from a $\Z$-order which are the $\Z^4,D_4$ and $A_2\oplus{}A_2$ lattices.
\end{theorem}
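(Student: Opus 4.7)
The plan is to exhaust the cases allowed by the classification of Theorem \ref{Thm: Characterization of WR lattice} in the regime $[K:\Q]=1$, and then to identify each resulting lattice explicitly via the canonical embedding. First, by Proposition \ref{Lemma: well rounded if and only if basis} the problem reduces to classifying $\Z$-orders $\Lambda\subseteq A$ whose reduced norm one subgroup $\Lambda^1$ contains a $\Q$-basis for $A$. By Theorem \ref{Thm: Characterization of WR lattice}(1), this occurs if and only if $\Lambda^1$ is isomorphic to the binary tetrahedral group or to a binary dihedral group of order $4m$ with $\varphi(2m)=2$, forcing $m\in\{2,3\}$. Applying Theorem \ref{Thm: Classification of groups} to each group then pins down $A$: the tetrahedral case forces $A=\left(\frac{-1,-1}{\Q}\right)=B_1$; the dihedral case with $m=2$ gives $t=\mathrm{Tr}_{\Q(\zeta_4)/\Q}(\zeta_4)=0$ and hence $A=\left(\frac{-1,-4}{\Q}\right)\cong B_1$ (because $-4=(-1)\cdot 2^2$); and the dihedral case with $m=3$ gives $t=\mathrm{Tr}_{\Q(\zeta_6)/\Q}(\zeta_6)=1$ and hence $A=\left(\frac{-1,-3}{\Q}\right)=B_2$.

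Next, I would show that in each of these three cases the order $\Lambda$ is determined up to type. Both $B_1$ and $B_2$ are classical examples of rational quaternion algebras of class number one, possessing a unique maximal $\Z$-order up to conjugation, namely $\Lambda_2$ (Hurwitz) in $B_1$ and $\Lambda_3$ in $B_2$. Consequently, every $\Z$-order of $B_1$ (respectively $B_2$) is contained, up to conjugation by an element of $A^\times$, in $\Lambda_2$ (respectively $\Lambda_3$). In the tetrahedral case, the $24$ units of the binary tetrahedral group already $\Z$-span $\Lambda_2$, so $\Lambda\supseteq\Lambda_2$ and hence $\Lambda=\Lambda_2$ by maximality. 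In the order-$8$ dihedral case, the binary tetrahedral group $\Lambda_2^1$ has a unique subgroup of order $8$, namely $Q_8=\{\pm 1,\pm i,\pm j,\pm ij\}$, so up to conjugation $\Lambda^1=Q_8$; its $\Z$-span is exactly $\Lambda_1$, and since $[\Lambda_2:\Lambda_1]=2$ the only $\Z$-order between them whose norm-one unit group is $Q_8$ rather than the full tetrahedral group is $\Lambda_1$ itself. In the order-$12$ dihedral case, the twelve elements $\zeta_6^k$ and $i\zeta_6^k$, $k=0,\ldots,5$, with $\zeta_6=\frac{1+j}{2}\in B_2$, $\Z$-span $\Lambda_3$, forcing $\Lambda=\Lambda_3$.

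By Proposition \ref{Prop: isomorphic orders are the same lattice}, orders of the same type give equal ideal lattices, so the three cases yield exactly the three lattices $(\Lambda_1,b_\alpha)$, $(\Lambda_2,b_\alpha)$, $(\Lambda_3,b_\alpha)$. To identify them, I would simply evaluate the canonical embedding of Definition \ref{Def: canonical embedding}. For $(\Lambda_1,b_\alpha)$ in $B_1$, the generator matrix comes out to $\sqrt{2\alpha}\,I_4$, yielding $\Z^4$ scaled by $\sqrt{2\alpha}$. For $(\Lambda_2,b_\alpha)$, the generator matrix computed in Example \ref{Example: Hurwitz order over the rationals} is exactly that of $D_4$ scaled by $\sqrt{2\alpha}$. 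For $(\Lambda_3,b_\alpha)$ in $B_2$ with $a=-1,b=-3$, the images of $1,i,\frac{1+j}{2},\frac{i+ij}{2}$ assemble (after a permutation of coordinates) into a block-diagonal generator matrix with two $2\times 2$ blocks, each proportional to the standard $A_2$ generator matrix, identifying the lattice as $A_2\oplus A_2$ scaled by $\sqrt{2\alpha}$.

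The main obstacle is the uniqueness step: one has to invoke the class-number-one property of both $B_1$ and $B_2$, together with the index $[\Lambda_2:\Lambda_1]=2$ and the subgroup structure of the binary tetrahedral group, to rule out exotic $\Z$-orders that would share the required norm-one unit group. Once this is established, identifying the three resulting lattices is a routine calculation with the canonical embedding.
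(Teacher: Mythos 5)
Your proposal is correct and follows the same overall skeleton as the paper's proof (reduce to the three pairs $(A,\Lambda^1)$ via the classification theorems, pin down the order up to conjugacy, apply Proposition \ref{Prop: isomorphic orders are the same lattice}, identify the lattices), but it handles the crucial uniqueness step differently. The paper simply cites Voight's classification of orders in definite rational quaternion algebras with noncyclic unit group (\cite[Theorem 11.5.14]{voight2021quaternion}, together with \cite[Lemma 11.5.9]{voight2021quaternion} to identify $\Lambda^1$ with $\Lambda^\times$), which immediately gives that $\Lambda$ is conjugate to one of $\Lambda_1,\Lambda_2,\Lambda_3$. You instead re-derive this from first principles: you invoke that $B_1$ and $B_2$ have type number one (unique maximal order up to conjugation), conjugate $\Lambda$ into $\Lambda_2$ (resp.\ $\Lambda_3$), and then use that the $\Z$-span of the relevant finite norm-one group already equals $\Lambda_1$, $\Lambda_2$ or $\Lambda_3$, together with $[\Lambda_2:\Lambda_1]=2$ and the fact that $Q_8$ is the unique subgroup of order $8$ of the binary tetrahedral group. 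This is a valid and more self-contained argument, at the price of needing the (standard, but unproved here) class-number-one/type-number-one facts for the algebras of discriminant $2$ and $3$; strictly speaking you should cite these, just as the paper cites Voight. Your explicit identifications of the three lattices via the canonical embedding ($\sqrt{2\alpha}\,I_4$ for $\Lambda_1$, the $D_4$ matrix of Example \ref{Example: Hurwitz order over the rationals} for $\Lambda_2$, and the two hexagonal blocks for $\Lambda_3$) match the computations the paper carries out in the text immediately preceding the theorem, and your reduction of the dihedral cases to $\left(\frac{-1,-4}{\Q}\right)\cong B_1$ and $\left(\frac{-1,-3}{\Q}\right)=B_2$ is a correct unwinding of Theorem \ref{Thm: Classification of groups} that the paper leaves implicit in Table \ref{tab:cases}.
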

\begin{proof}
    By Table \ref{tab:cases}, it suffices to look at $A = B_1$ or $A=B_2$ and to consider a $\Z$-order $\Lambda$ whose reduced norm one subgroup $\Lambda^1$ is isomorphic to the binary tetrahedral group of order 24, the binary dihedral group of order 8 in $A_1$ or the binary dihedral group of order 12 in $A_2.$ Since $A$ is a totally definite quaternion algebra over $\Q$, then $\Lambda^1 = \Lambda^\times$ by Lemma \cite[Lemma 11.5.9]{voight2021quaternion}. By \cite[Theorem 11.5.14]{voight2021quaternion}, $\Lambda$ is isomorphic to $\Lambda_1,\Lambda_2$ in $B_1$ or $\Lambda_3$ in $B_2$. So $\Lambda$ is a conjugate of $\Lambda_i$ for $i=1,2,3$ and by Proposition \ref{Prop: isomorphic orders are the same lattice}, $(\Lambda,b_\alpha) = (\Lambda_i,b_\alpha)$ for each $i=1,2,3.$ 
\end{proof}

\section{Examples and Discussions}
\label{sec:ex}
In this section, the computations are done using MAGMA \cite{MR1484478}. Standard notations for lattices (e.g., $D_4$, $A_2$, $A_6$) and the properties of these lattices are detailed in \cite{conway2013sphere}.

Theorem \ref{Prop: wellrounded ideal lattice for the case over Q} provides a classification of well-rounded lattices obtained from a $\Z$-order of a totally definite quaternion algebra over $\Q$, which exhibits three well-rounded lattices in dimension $4$: $\Z^4$, $D_4$ and $A_2\oplus A_2$.  
It is natural to wonder what happens when $K=\Q$ is replaced by $K$ a number field of degree $\geq 2$. 
We start with an example. 


\begin{example}
\label{ex:zeta14}
Let $\zeta_{14}$ be a $14$th primitive root of unity and set $\theta = \zeta_{14}+\zeta^{-1}_{14}$. 
Consider the totally definite quaternion algebra $A= \left(\frac{-7,-1}{K}\right)$ where $K = \Q(\theta)$ of degree 3, with $\I_K=\Z[\theta]$. Let $\set{1,i,j,ij}$ be the standard $K$-basis of $A$.  
Set $\beta=\theta^2-3\theta - 3$ and $\gamma = 7+i$. 
Let $\Lambda$ be the $\I_K$-order generated by the set 
\[
\left\{1,\frac{\gamma}{2},  \frac{\beta \gamma}{14}, -j, \frac{\gamma j}{2}, \frac{\beta\gamma j}{14}\right\}.
\]
Then $\Lambda^1$ is isomorphic to the binary dihedral group of order $28$ with generators 
$$x = -j, y=\frac{\theta^2- \theta -1}{2}+ \frac{\beta i}{14}.$$ Choose $\alpha = 1$, then $(\Lambda,b_1)$ is well-rounded by Theorem \ref{Thm: Characterization of WR lattice} and a $\Q$-basis for $A$ is $B= \set{y^i x^j| i=0,\ldots,5, j=0,1}$ by Proposition \ref{Cor: Explicit basis}. A $\Z$-basis for $\Lambda$ is 
    \[\left \{1, \theta,\theta^2, \frac{\gamma}{2},\frac{\theta\gamma}{2} ,\beta'\frac{\gamma
    }{14}, -j, -\theta j, -\theta^2 j, \frac{\gamma j}{2},\frac{\theta\gamma j}{2}, \beta'\frac{\gamma j}{14} \right\}\]
    with $\beta'=\theta^2+4\theta+4$.
    
    A Gram matrix for the ideal lattice $(\Lambda,b_1)$ is 
    \[ G = \setcounter{MaxMatrixCols}{12}\begin{pmatrix}
      6 & 2 & 10 & 21 & 7 & 21 & 0 & 0 & 0 & 0 & 0 & 0 \\
2 & 10 & 8 & 7 & 35 & 28 & 0 & 0 & 0 & 0 & 0 & 0 \\
10 & 8 & 26 & 35 & 28 & 49 & 0 & 0 & 0 & 0 & 0 & 0 \\
21 & 7 & 35 & 84 & 28 & 84 & 0 & 0 & 0 & 0 & 0 & 0 \\
7 & 35 & 28 & 28 & 140 & 112 & 0 & 0 & 0 & 0 & 0 & 0 \\
21 & 28 & 49 & 84 & 112 & 140 & 0 & 0 & 0 & 0 & 0 & 0 \\
0 & 0 & 0 & 0 & 0 & 0 & 6 & 2 & 10 & -21 & -7 & -21 \\
0 & 0 & 0 & 0 & 0 & 0 & 2 & 10 & 8 & -7 & -35 & -28 \\
0 & 0 & 0 & 0 & 0 & 0 & 10 & 8 & 26 & -35 & -28 & -49 \\
0 & 0 & 0 & 0 & 0 & 0 & -21 & -7 & -35 & 84 & 28 & 84 \\
0 & 0 & 0 & 0 & 0 & 0 & -7 & -35 & -28 & 28 & 140 & 112 \\
0 & 0 & 0 & 0 & 0 & 0 & -21 & -28 & -49 & 84 & 112 & 140
    \end{pmatrix}\] with disc$(\Lambda,b_\alpha)$ = $\det G = 282475249=7^{10}$.

Let $\Lambda' \subseteq \Lambda$ be the $\Z$-submodule spanned by $B$, then $(\Lambda',b_1)$ is a sublattice of $(\Lambda,b_1)$ with a Gram matrix  
\[ G' = \setcounter{MaxMatrixCols}{12}\begin{pmatrix}
    6&1&-1&1&-1&1&0&0&0&0&0&0\\
    1&6&1&-1&1&-1&0&0&0&0&0&0\\
    -1&1&6&1&-1&1&0&0&0&0&0&0\\
    1&-1&1&6&1&-1&0&0&0&0&0&0\\
    -1&1&-1&1&6&1&0&0&0&0&0&0\\
    1&-1&1&-1&1&6&0&0&0&0&0&0\\
    0&0&0&0&0&0&6&1&-1&1&-1&1\\
    0&0&0&0&0&0&1&6&1&-1&1&-1\\
    0&0&0&0&0&0&-1&1&6&1&-1&1\\
    0&0&0&0&0&0&1&-1&1&6&1&-1\\
    0&0&0&0&0&0&-1&1&-1&1&6&1\\
    0&0&0&0&0&0&1&-1&1&-1&1&6
\end{pmatrix}\] where $\det G' = 282475249=7^{10}$. This means that $(\Lambda',b_1)= (\Lambda,b_1)$ and thus $\Lambda = \Lambda'$.
 
The resulting lattice is the lattice $A_6^*\oplus A_6^*$ where $A_6^*$ denotes the dual lattice of $A_6$ and it is a known fact that the minimum vector in $A_6^*$ has norm $6$. Since $\det G'= M'M'^T \neq 0$, this implies that the generator matrix $M'$ with rows containing vectors $\sigma(y^i x^j)$ for $ i=0,\ldots,5, j =0,1$ has nonzero determinant and thus is linearly independent over $\R$. So $\sigma(B) = \set{\sigma(y^i x^j)|i=0,\ldots,5, j=0,1}$ is an $\R$-basis for $\R^{12}$ consisting of minimum vectors of $(\Lambda,b_1)$.

\end{example}

Classification of well-rounded lattices from totally definite quaternion algebras with $[K:\Q]\geq 2$ would be an interesting open question. In particular,  the choice of an $\I_K$-order plays a crucial role in constructing well-rounded ideal lattices arising from quaternion algebras, as shown in Theorem \ref{Thm: Characterization of WR lattice}. Several known orders have reduced norm one groups isomorphic to exceptional groups, as described in \cite[Proposition 3]{maire2006cancellation}, together with the necessary conditions in \cite[Proposition 5]{maire2006cancellation} for an order to admit such an isomorphism. Since isomorphic orders yield the same lattice by Proposition \ref{Prop: isomorphic orders are the same lattice}, a natural direction for further work is to construct new orders that are not isomorphic to these known examples yet whose reduced norm one subgroup is isomorphic to one of the groups listed in Table \ref{tab:cases}.

Corollary \ref{corr: wellrounded lattice from order is sufficient to ideal lattice to be well rounded}
provides a sufficient condition for an ideal lattice $(I,b_\alpha)$ to be well-rounded. 
Example \ref{ex: lattice from ideal} below shows that when considering a right ideal $I$ of an $\I_K$-order $\Lambda$, it is possible to get a well-rounded lattice $(I,b_\alpha)$ that is different from the well-rounded lattice $(\Lambda,b_\alpha)$ where $\alpha \in \Q^\times_{>0}$.

\begin{example}\label{ex: lattice from ideal}
    Let $A = \left( \frac{-1,-1}{\Q(\sqrt{3})}\right)$, $\alpha = \frac{1}{2}$ and consider the maximal order $\Lambda$ generated by 
   $\set{-1, i ,\frac{\sqrt{3}i+j}{2},\frac{\sqrt{3}+ij}{2}}$. Consider the following right ideal $I$ of $\Lambda$ generated by the set \[\set{\sqrt{3}-1, 2i,(\sqrt{3}+1)i, 1+\frac{\sqrt{3}i+j}{2}, \frac{\sqrt{3}+2i+ij}{2}}.\] Using MAGMA \cite{MR1484478} to compute the $\Z$-basis for $\Lambda$ and $I$, we obtain the following Gram matrix for $(\Lambda,b_\frac{1}{2})$: 
   \[G_\Lambda:=  {\begin{pmatrix}
       2&0&0&0&0&0&0&-3\\
       0&6&0&0&0&0&-3&0\\
       0&0&2&0&0&3&0&0\\
       0&0&0&6&3&0&0&0\\
       0&0&0&3&2&0&0&0\\
       0&0&3&0&0&6&0&0\\
       0&-3&0&0&0&0&2&0\\
       -3&0&0&0&0&0&0&6
   \end{pmatrix}},
   \]
   and the following Gram matrix for and $(I,b_\frac{1}{2})$: 
   \[
   G_I = {\begin{pmatrix}
       8&-12&0&0&-2&6&3&-3\\
       -12&24&0&0&6&-6&-3&9\\
       0&0&8&4&0&6&4&0\\
       0&0&4&8&3&3&2&6\\
       -2&6&0&3&4&0&0&6\\
       6&-6&6&3&0&12&6&0\\
       3&-3&4&2&0&6&4&0\\
       -3&9&0&6&6&0&0&12
   \end{pmatrix}}. \]
   The determinants are 81 and 1296 respectively.  It can be checked using MAGMA that $\Lambda^1$ is isomorphic to the binary dihedral group of order 24 and by Theorem \ref{Thm: Characterization of WR lattice}, $(\Lambda,b_\frac{1}{2})$ is well-rounded and that $\|(\Lambda,b_\frac{1}{2})\| =2$ by Proposition \ref{Prop: Lower bound for Hermite invariant for quaternion algebra}, this implies that $(I,b_\frac{1}{2})$ is well-rounded by Corollary \ref{corr: wellrounded lattice from order is sufficient to ideal lattice to be well rounded}. We will show that the two lattices are not similar. Recall that 
for $L_1,L_2$ two lattices with Gram matrices $G_1,G_2$ respectively, we say that $L_1$ and $L_2$ are similar if there exist $r >0$ and a unimodular matrix $U$ such that $G_1 = r UG_2U^T.$ 
   
If the two lattices are similar, there exists $r>0$ such that $\det rG_\Lambda = \det G_I \implies r= \sqrt{2}$. This implies that $\|(I,b_\frac{1}{2})\| = 2 r = 2\sqrt{2}$. However, $(I,b_\frac{1}{2})$ is an integral lattice, so $\|(I,b_\frac{1}{2})\|$ is a positive integer and thus cannot be $2\sqrt{2}.$ Thus, the two lattices are not similar.
\end{example}

The above example motivates the search for lattices coming from right ideals. In particular, 
the converse of Corollary \ref{corr: wellrounded lattice from order is sufficient to ideal lattice to be well rounded} may not be true. Fix $n= [K:\Q]$, one may search for right ideals $I$ of an $\I_K$-order $\Lambda$ for which $\Lambda^1$ is not among the groups in Table \ref{tab:cases}, yet the resulting ideal lattice remains well-rounded.

\newpage
\bibliographystyle{alpha}
\bibliography{references.bib}
\end{document}